\documentclass[10pt]{amsart}

\parindent=0pt


%
\usepackage{amsfonts,amsmath,amsthm}
\usepackage{amssymb}
\usepackage{latexsym}
\usepackage{epsfig}
\usepackage{graphicx,color,graphics}

\setcounter{MaxMatrixCols}{10}

%

 \newtheorem{lemma}{Lemma}[section]
 
 \newtheorem{theorem}[lemma]{Theorem}
 
 \newtheorem{remark}[lemma]{Remark}

\newcommand{\N}{\ifmmode{{\Bbb N}}\else{\mbox{${\Bbb N}$}}\fi}
\newcommand{\R}{\ifmmode{{\Bbb R}}\else{\mbox{${\Bbb R}$}}\fi}

\newcommand{\cal}{\mathcal}

\title[Asymptotic behavior of a viscoelastic equation with second sound]{Asymptotic behavior of an inhomogeneous flexible structure with Cattaneo type of thermal effect}

\author{Margareth. S. Alves}
\address{Departamento de Matem{\'a}tica.
Universidade Federal de Vi{\c c}osa. UFV. MG. Brasil.} 
\email{{\tt malves@ufv.br}}

\author{Pedro Gamboa} 
\address{Instituto de
Matem{\'a}tica. UFRJ. Av. Athos da Silveira Ramos.
 RJ. Brazil}
 \email{{\tt pgamboa@im.ufrj.br}}
\author{Ganesh C. Gorain} 
\address{Department of Mathematics, Purulia
J.K.College. West Bengal. India.}
\email{{\tt goraing@gmail.com}}

\author{Amelie Rambaud}
\address{Universidad del B{\'i}o-B{\'i}o.
Concepci{\'o}n. Chile.} 
\email{{\tt arambaud@ubiobio.cl}} 

\author{Octavio Vera}
\address{Universidad del B{\'i}o-B{\'i}o.  Concepci{\'o}n. Chile.}
\email{{\tt overa@ubiobio.cl}}

\date{}

\begin{document}

\maketitle


%
%

\begin{abstract}
We consider vibrations of an
inhomogeneous flexible structure modeled by a $1$D viscoelastic equation with Kelvin-Voigt, coupled  with an expected dissipative effect : heat conduction governed by Cattaneo's law (second sound). We
establish the well-posedness of the system and we prove the stabilization to be exponential for one set of boundary conditions, and at least polynomial for another set of boundary conditions. Two different methods are used: the energy method and another more original, using the semigroup approach and studying the Resolvent of the system.  
\end{abstract}
\keywords{Cattaneo's law \and Semigroup
theory \and Polynomial stability \and exponential stability \and viscoelastic}
\renewcommand{\theequation}{\thesection.\arabic{equation}}
\setcounter{equation}{0}
\section{Introduction and main results}\label{sec:intro}
One of the main issues concerning the vibrations in models of
flexible structural systems is the question of the stabilization of the 
structure. Indeed, one expects to prevent a system from resonance
effects, and wants to ensure a decay of the total
energy, at least polynomial, and hopefully exponential. It is
therefore of interest to investigate the theory behind the stabilization
processes in flexible structural systems and  to control their vibrations. One way to obtain a dissipative effect, and so a decay of the energy of the system, is to add a damping force. 
There exist various types of damping, such as boundary dampings,
internal dampings and localized dampings (see for example \cite{al1,liu1,chen1,ko1,la1,ma1} and references therein). 

In these kinds of problems, the best stability that one can expect is the so-called uniform stability. For example, in 2013, G. C. Gorain \cite{go1} has
established uniform exponential stability of the problem
\begin{eqnarray*}
\label{001}m(x)\,u_{tt} - (p(x)\,u_{x} + 2\,\delta(x)\,u_{xt})_{x} =
f(x),\quad\mbox{on}\quad (0,\,L)\times \mathbb{R}^{+},
\end{eqnarray*}
\noindent which describes the vibrations of an
inhomogeneous flexible structure with an exterior disturbing force
$f.$ More recently, M. Siddhartha {\it et. al.} \cite{misra1} showed the
exponential stability of the vibrations of a inhomogeneous flexible
structure with thermal effect governed by the Fourier law, 
\begin{eqnarray}
\label{002}& m(x)\,u_{tt} - (p(x)\,u_{x} + 2\,\delta(x)\,u_{xt})_{x}
- \kappa\,\theta_{x}= f, \\
\label{003}& \theta_{t} - \theta_{xx} - \kappa\,u_{xt} = 0.
\end{eqnarray}
\noindent Indeed, it is physically relevant to take into account thermal effects in flexible structures (see for example \cite{carlsonthermo}). However, in the above model, the temperature has an infinite velocity of propagation (heat equation): this property of the model is not consistent with the reality, where the heating or cooling of a flexible structure will usually take some time. Many researches have thus been conducted in order to modify the model of thermal effect. In the present paper, we will investigate a problem of vibrations for an inhomogeneous material of viscoelastic type (Kelvin-Voigt damping) subject to a thermal effect, now modeled by the so-called Cattaneo's law \cite{racke-second-sound-thermo}:
\begin{align}
\label{101}& m(x)\,u_{tt} - (p(x)\,u_{x} + 2\,\delta(x)\,u_{xt})_{x}
+ \eta\,\theta_{x}= 0, \\
\label{102}& \theta_{t} + \kappa\,q_{x} + \eta\,u_{xt} = 0, \\
\label{103}& \tau\,q_{t} + \beta\,q + \kappa\,\theta_{x}=0
\end{align}
\noindent where $x\in [0,\,L]$ and $t\geq 0$. Here, $\eta>0$ is the coupling
constant. $\beta,\ \kappa>0.$ $q=q(x,\,t)$ is the heat flux and the
parameter $\tau>0$ is the relaxation time describing the time lag in the response
for the temperature. Now the model of heat conduction is of hyperbolic type so that we have a finite speed of propagation. (Note that when taking formally $\tau \ = 0$ in the above system, we recover the viscothermoelastic system with the Fourier law \eqref{002}-\eqref{003}.)

 The functions $m(x),$ $\delta(x),$ and $p(x)$ are responsible for the inhomogeneous structure of the beam, and respectively denote mass per unit length of structure, coefficient
of internal material damping, and a positive function related to the
wave velocity at the vibrations at a point $x\in
\mathbb{R}^{*}=(0,\,+\infty).$  We will assume, in the rest of the paper:
\begin{equation}\label{hyp-p-delta-m}
m, \, \delta , \, p \, \in \, W^{1, \infty} \left(0, L\right) , \quad m (x), \, \delta (x) , \, p(x) \, > \, 0 ,\, \forall \, x \ \in \ [0, L]. 
\end{equation}
\noindent The initial conditions are given by
\begin{equation}\label{104}
u(x,\,0) = u_{0}(x),\, \, u_{t}(x,\,0)=u_{1}(x),\, \, 
\theta(x,\,0) = \theta_{0}(x),\, \,  q(x,\,0) = q_{0}(x).
\end{equation}
\noindent But concerning the boundary conditions, several choices are possible, depending on the physical situation one wants to deal with. Unfortunately, in general some lead to more tedious computations. Therefore, in the present paper we will deal with two sets of boundary conditions for system \eqref{101}-\eqref{103}. The first ones, corresponding to a rigidly clamped structure with temperature held constant at both extremities:
\begin{align}
\label{105}& u(0,\,t)=u(L,\,t) = 0,\qquad
\theta(0,\,t)=\theta(L,\,t) = 0,\qquad t\geq 0,
\end{align}
\noindent and the other one corresponding to a rigidly clamped structure with zero heat flux on the boundary:
\begin{align}
\label{bc-diri-q}& u(0,\,t)=u(L,\,t) = 0,\qquad
q(0,\,t)=q(L,\,t) = 0,\qquad t\geq 0.
\end{align}
For smooth solutions, this system enjoys  natural energy functionals $\mathcal{E}_1, \, \mathcal{E}_2 :\, \mathbb{R}^+ \ \rightarrow \ \mathbb{R}^+$, given by:
\begin{multline}\label{106} 
\mathcal{E}_{1}(t) =  \frac{1}{2}
\left[\int_{0}^{L}p(x)\,|u_{x}|^{2}\ dx + \int_{0}^{L}m(x)\,|u_{t}|^{2}\
dx \right.\\
 \left.+ \int_{0}^{L}|\theta|^{2}\ dx + \tau\int_{0}^{L}|q|^{2}\
dx\right],
 \end{multline}
\noindent and taking the time derivative of \eqref{101}-\eqref{104}, we build:
 \begin{multline}\label{108}
 \mathcal{E}_{2}(t) = \,  \frac{1}{2}
\left[\int_{0}^{L}p(x)\,|(u_{x})_{t}|^{2}\ dx +
\int_{0}^{L}m(x)\,|(u_{t})_{t}|^{2}\ dx \right. \\
\left. + \int_{0}^{L}|\theta_{t}|^{2}\ dx
+ \tau\int_{0}^{L}|q_{t}|^{2}\ dx\right]. 
\end{multline}
\noindent For smoother solutions we may generalize these energies up to the order $n \ \in \mathbb{N}$, as follows:
\begin{multline}\label{energy-n}
 \mathcal{E}_{n}(t) = \,  \frac{1}{2}
\left[\int_{0}^{L}p(x)\,|(u_{x})_{t\dots t}|^{2}\ dx +
\int_{0}^{L}m(x)\,|(u_{t})_{t\dots t}|^{2}\ dx \right. \\
\left. + \int_{0}^{L}|\theta_{t\dots t}|^{2}\ dx
+ \tau\int_{0}^{L}|q_{t\dots t}|^{2}\ dx\right],
\end{multline}
\noindent where $h_{t \dots t} = \frac{\partial^n}{\partial t^n} h$ for $h = u_x, \ u_t, \ \theta, $ or $q$. Moreover, it is straightforward to establish, for strong solution to \eqref{101}--\eqref{102} the dissipation of the energies , given in the following lemma. 
\begin{lemma}
\label{lemma101-102} For any strong solution to  system
\eqref{101}-\eqref{105} or \eqref{101}-\eqref{104}-\eqref{bc-diri-q}, smooth enough to define the energy functions \eqref{106},  \eqref{108}, or \eqref{energy-n}, then we have, for all $t > 0$:
\begin{eqnarray}
\label{107} \frac{d\mathcal{E}_{1}}{dt}(t) = -\
2\int_{0}^{L}\delta(x)\,|u_{xt}|^{2}\ dx - \beta\int_{0}^{L}|q|^{2}\
dx,
\end{eqnarray}
\begin{eqnarray}
\label{109}\frac{d\mathcal{E}_{2}}{dt}(t) = -\
2\int_{0}^{L}\delta(x)\,u_{xtt}^{2}\ dx -
\beta\int_{0}^{L}|q_{t}|^{2}\,dx,
\end{eqnarray}
\begin{eqnarray}
\label{diff-energy-n}\frac{d\mathcal{E}_{n}}{dt}(t) = -\
2\int_{0}^{L}\delta(x)\,|u_{xt\dots t}|^{2}\ dx -
\beta\int_{0}^{L}|q_{t\dots t}|^{2}\,dx.
\end{eqnarray}
\end{lemma}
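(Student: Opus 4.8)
The plan is to prove the first identity \eqref{107} directly, and then obtain \eqref{109} and \eqref{diff-energy-n} by running the identical argument on the system satisfied by the time-differentiated variables. Differentiating \eqref{106} and moving the derivative inside the integrals yields
\begin{equation*}
\frac{d\mathcal{E}_1}{dt}(t) = \int_0^L p\,u_x u_{xt}\,dx + \int_0^L m\,u_t u_{tt}\,dx + \int_0^L \theta\,\theta_t\,dx + \tau\int_0^L q\,q_t\,dx .
\end{equation*}
I would then eliminate the remaining time derivatives using the equations themselves: substitute $m\,u_{tt} = (p\,u_x + 2\delta\,u_{xt})_x - \eta\,\theta_x$ from \eqref{101} into the second integral, $\theta_t = -\kappa\,q_x - \eta\,u_{xt}$ from \eqref{102} into the third, and $\tau\,q_t = -\beta\,q - \kappa\,\theta_x$ from \eqref{103} into the fourth.

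The key step is an integration by parts in $x$ on each term carrying a spatial derivative. The elastic term produces $\int_0^L u_t(p\,u_x + 2\delta\,u_{xt})_x\,dx = \left[u_t(p\,u_x + 2\delta\,u_{xt})\right]_0^L - \int_0^L u_{xt}(p\,u_x + 2\delta\,u_{xt})\,dx$, where the boundary bracket vanishes because $u(0,t)=u(L,t)=0$ forces $u_t(0,t)=u_t(L,t)=0$ under both boundary sets; this contributes $-\int_0^L p\,u_x u_{xt}\,dx$, which cancels the first integral above, together with the dissipative term $-2\int_0^L \delta\,|u_{xt}|^2\,dx$. Likewise the three coupling contributions cancel in pairs: the term $\eta\int_0^L u_{xt}\theta\,dx$ coming from $-\eta\,\theta_x$ (after integration by parts, again using $u_t=0$ on the boundary) cancels $-\eta\int_0^L \theta\,u_{xt}\,dx$ from \eqref{102}; and $\kappa\int_0^L \theta_x q\,dx$ arising from $-\kappa\,q_x$ cancels $-\kappa\int_0^L q\,\theta_x\,dx$ from \eqref{103}. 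What survives is precisely $-2\int_0^L \delta\,|u_{xt}|^2\,dx - \beta\int_0^L |q|^2\,dx$, which is \eqref{107}.

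The only point requiring a case distinction is the boundary bracket $\left[\theta\,q\right]_0^L$ generated when integrating $-\kappa\int_0^L \theta\,q_x\,dx$ by parts: under \eqref{105} it vanishes because $\theta$ is held to zero at both ends, whereas under \eqref{bc-diri-q} it vanishes because $q$ is held to zero there, so the identity holds uniformly for both boundary configurations. This is essentially the whole subtlety; everything else is bookkeeping. Finally, for \eqref{109} and \eqref{diff-energy-n} I would observe that differentiating \eqref{101}--\eqref{103} in $t$ (once, respectively $n-1$ times) gives a system of exactly the same form for $(u_{t\dots t},\theta_{t\dots t},q_{t\dots t})$, with boundary conditions inherited from \eqref{105} or \eqref{bc-diri-q} since time-differentiation of a homogeneous boundary condition stays homogeneous; the computation above then transfers verbatim. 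The main obstacle, such as it is, lies only in verifying that every boundary term vanishes in both configurations, which is delicate to track but elementary once the two cases are separated.
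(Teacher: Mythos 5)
Your proof is correct and is exactly the standard energy computation the paper has in mind: the paper states Lemma \ref{lemma101-102} without giving a proof (it is declared straightforward), and your argument---differentiate the energy, substitute the three equations, integrate by parts, verify each boundary bracket vanishes, then repeat verbatim for the time-differentiated system to get \eqref{109} and \eqref{diff-energy-n}---supplies precisely that. In particular, your observation that the only bracket requiring a case distinction is $\left[\theta\,q\right]_0^L$ (vanishing via $\theta=0$ under \eqref{105} and via $q=0$ under \eqref{bc-diri-q}) correctly identifies the one point where the two boundary configurations differ.
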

The first energy estimate will allow us to investigate well-posedness with the point of view of semigroups \cite{pa1}. While the two last ones will be necessary to study the asymptotic behaviour. Actually, we expect the system to be exponentially stable, no matter the boundary conditions, but it appears that, depending on the boundary conditions chosen, the proof of such a result is more technical because of the second sound effect modeled by the Cattaneo law (see for example the discussion in \cite{racke-second-sound-thermo}). The main results of the present work are concerned with the  asymptotic behaviour of the system with either boundary conditions \eqref{105} or \eqref{bc-diri-q} and may be stated as follows.  
\begin{theorem}
\label{theorem3001}
 For any $n \in \mathbb{N} - \{0\}$, for suitable initial data (to be made explicit later, depending on $n$), the strong solution to system  \eqref{101}--\eqref{104} complemented by boundary conditions \eqref{105} satisfies, for all $t > 0$:
\begin{eqnarray}
\label{3000}{\cal E}_{n}(t) \leq \,\frac{[{\cal E}_{n}(0)
+ {\cal E}_{n+1}(0)]}{t},
\end{eqnarray}
that is to say the semigroup associated to the initial boundary value problem is (at least) polynomially stable, with a decay rate of $t^{-1}$.
\end{theorem}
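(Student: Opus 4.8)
The plan is to reduce the stated decay to a single time-integrated \emph{observability-type} estimate and then to establish the latter by the energy (multiplier) method. For the reduction, note that Lemma~\ref{lemma101-102} gives $\frac{d\mathcal{E}_n}{dt}\le 0$, so
\begin{equation*}
\frac{d}{dt}\big(t\,\mathcal{E}_n(t)\big)=\mathcal{E}_n(t)+t\,\frac{d\mathcal{E}_n}{dt}(t)\le \mathcal{E}_n(t),
\end{equation*}
and integrating on $(0,t)$ yields $t\,\mathcal{E}_n(t)\le \int_0^t\mathcal{E}_n(s)\,ds$. Hence it suffices to prove the bound
\begin{equation*}
\int_0^t\mathcal{E}_n(s)\,ds\ \le\ \mathcal{E}_n(0)+\mathcal{E}_{n+1}(0),\qquad \forall\,t>0.
\end{equation*}
I would first carry out the case $n=1$; the general case follows verbatim after differentiating \eqref{101}--\eqref{103} $(n-1)$ times in $t$, which preserves the homogeneous boundary conditions \eqref{105} (so that $u_{t\dots t},\,\theta_{t\dots t}\in H_0^1(0,L)$) and replaces the pair $(\mathcal{E}_1,\mathcal{E}_2)$ by $(\mathcal{E}_n,\mathcal{E}_{n+1})$.

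For $n=1$, split $\mathcal{E}_1$ into its four terms. The heat-flux term $\tau\int_0^L|q|^2$ and, via Poincar\'e (recall $u_t\in H_0^1$), the kinetic term $\int_0^L m|u_t|^2$ are both dominated pointwise in $t$ by the dissipation $2\int_0^L\delta|u_{xt}|^2+\beta\int_0^L|q|^2$; by \eqref{107} their time integrals are therefore bounded by $\mathcal{E}_1(0)-\mathcal{E}_1(t)\le \mathcal{E}_1(0)$. The thermal term is the crucial one: using \eqref{103} to write $\kappa\,\theta_x=-\tau\,q_t-\beta\,q$, together with Poincar\'e ($\theta\in H_0^1$), one gets $\int_0^L|\theta|^2\lesssim\int_0^L|\theta_x|^2\lesssim\int_0^L|q_t|^2+\int_0^L|q|^2$. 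The first right-hand term integrates in time against the \emph{second-order} dissipation \eqref{109}, i.e.\ $\int_0^t\int_0^L|q_t|^2\le \beta^{-1}\mathcal{E}_2(0)$, and this is precisely where $\mathcal{E}_{n+1}(0)$ enters the estimate.

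It remains to control $\int_0^L p|u_x|^2$. I would use the multiplier $I(t)=\int_0^L m\,u_t\,u\,dx$: differentiating and substituting \eqref{101}, then integrating by parts in $x$ (boundary terms vanish since $u=0$ at $x=0,L$), gives
\begin{equation*}
I'(t)=\int_0^L m|u_t|^2-\int_0^L p|u_x|^2-2\int_0^L\delta\,u_{xt}u_x+\eta\int_0^L\theta\,u_x.
\end{equation*}
Solving for $\int_0^L p|u_x|^2$ and integrating on $(0,t)$, the endpoint terms $I(t),I(0)$ are bounded by $\mathcal{E}_1$ (Cauchy--Schwarz and Poincar\'e), the kinetic integral was handled above, and the two cross terms are split by Young's inequality: the factors carrying $u_x$ are absorbed into the left-hand side for a small parameter, while the remaining $\int_0^t\int_0^L\delta|u_{xt}|^2$ and $\int_0^t\int_0^L|\theta|^2$ are already controlled. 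This closes the estimate $\int_0^t\mathcal{E}_1(s)\,ds\le \mathcal{E}_1(0)+\mathcal{E}_2(0)$ (after tracking the constants, which can be arranged to collapse to coefficient one).

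I expect the main obstacle to be the thermal term: because the Cattaneo law \eqref{103} only controls $\theta_x$ through $q_t$ rather than through $q$ alone, recovering $\int_0^L|\theta|^2$ forces the use of one extra time derivative, hence the appearance of $\mathcal{E}_{n+1}(0)$ and the loss that downgrades the expected exponential rate to the polynomial rate $t^{-1}$. A secondary technical point is the careful bookkeeping of the Young-inequality parameters and the Poincar\'e and coefficient constants needed to obtain the clean coefficient in \eqref{3000}; these rely throughout on \eqref{hyp-p-delta-m} guaranteeing $\min(m,\delta,p)>0$ and $\max(m,\delta,p)<\infty$.
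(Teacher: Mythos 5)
Your proposal is correct and follows essentially the same route as the paper: the same multiplier $\int_0^L m\,u_t\,u\,dx$ (the paper's $\mathcal{F}_1$, augmented there by $\int_0^L \delta\,u_x^2\,dx$ so the cross term $-2\int_0^L\delta\,u_{xt}u_x$ cancels exactly rather than being absorbed by Young as you do), the same use of the Cattaneo equation \eqref{103} to trade $\theta_x$ for $q_t$ and $q$, and the same invocation of the second-order dissipation \eqref{109}, which is indeed exactly where $\mathcal{E}_{n+1}(0)$ enters; the paper merely packages these steps into the Lyapunov functional $\mathcal{L}_1=\mathcal{E}_1+\mathcal{E}_2+\varepsilon\,\mathcal{F}_1$ and integrates \eqref{314} in time, whereas you integrate term by term after the same reduction $t\,\mathcal{E}_n(t)\le\int_0^t\mathcal{E}_n(s)\,ds$. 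The one caveat is your parenthetical claim that the constants can be made to ``collapse to coefficient one'': as in the paper's own derivation, the argument genuinely yields \eqref{3000} only up to a multiplicative constant $C$ depending on $m,\delta,p,\eta,\beta,\tau,\kappa,L$ (compare \eqref{316}), so the clean coefficient in the theorem statement is an imprecision your proof shares with the paper's.
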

The proof of this Theorem will use the energy method, and a suitable Lyapunov functional. 
\begin{theorem}
\label{theorem3000}
 For suitable initial data (to be made explicit later), the semigroup generated by system  \eqref{101}--\eqref{104} complemented by boundary conditions \eqref{bc-diri-q} is exponentially stable.
\end{theorem}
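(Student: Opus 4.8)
The plan is to combine the semigroup framework with the frequency-domain (resolvent) characterisation of exponential decay due to Gearhart, Pr\"uss and Huang. First I would recast \eqref{101}--\eqref{103}, \eqref{bc-diri-q} as an abstract Cauchy problem $U_{t}=\mathcal{A}U$ on the energy space $\mathcal{H}=H^{1}_{0}(0,L)\times L^{2}(0,L)\times L^{2}(0,L)\times L^{2}(0,L)$, with state $U=(u,v,\theta,q)$, $v=u_{t}$, equipped with the inner product for which $\|U\|_{\mathcal{H}}^{2}=2\,\mathcal{E}_{1}$. The dissipation law \eqref{107} reads $\mathrm{Re}\,\langle\mathcal{A}U,U\rangle_{\mathcal{H}}=-2\int_{0}^{L}\delta|u_{xt}|^{2}\,dx-\beta\int_{0}^{L}|q|^{2}\,dx\le 0$, so $\mathcal{A}$ is dissipative; together with solvability of the stationary problem (so that $0\in\rho(\mathcal{A})$), Lumer--Phillips shows $\mathcal{A}$ generates a $C_{0}$-semigroup of contractions. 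By the Gearhart--Pr\"uss--Huang theorem it then suffices to verify the two conditions $i\mathbb{R}\subset\rho(\mathcal{A})$ and $\sup_{\lambda\in\mathbb{R}}\|(i\lambda I-\mathcal{A})^{-1}\|_{\mathcal{L}(\mathcal{H})}<\infty$.

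For the first condition I would note that $\mathcal{A}^{-1}$ is compact (the domain embeds compactly into $\mathcal{H}$ by Rellich), so the spectrum is discrete and it is enough to exclude purely imaginary eigenvalues. If $i\lambda U=\mathcal{A}U$ with $\lambda\in\mathbb{R}$, then $0=\mathrm{Re}\,\langle\mathcal{A}U,U\rangle$ forces $u_{xt}\equiv 0$ and $q\equiv 0$; feeding this back into the three equations and using the boundary conditions yields $U=0$, whence $i\mathbb{R}\subset\rho(\mathcal{A})$.

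The uniform resolvent bound, proved by contradiction, is the core of the argument. If it failed there would exist $\lambda_{n}\in\mathbb{R}$ and $U_{n}=(u_{n},v_{n},\theta_{n},q_{n})\in D(\mathcal{A})$ with $\|U_{n}\|_{\mathcal{H}}=1$ and $F_{n}:=(i\lambda_{n}I-\mathcal{A})U_{n}\to 0$ in $\mathcal{H}$; since the resolvent is continuous on $i\mathbb{R}\subset\rho(\mathcal{A})$, a bounded subsequence of $\lambda_{n}$ would already contradict the blow-up, so I may assume $|\lambda_{n}|\to\infty$. Taking $\mathrm{Re}\,\langle F_{n},U_{n}\rangle$ recovers the dissipation and gives $\|v_{n,x}\|_{L^{2}}\to 0$ and $\|q_{n}\|_{L^{2}}\to 0$. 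The first resolvent equation $i\lambda_{n}u_{n}-v_{n}=f_{1,n}$ then yields $i\lambda_{n}u_{n,x}=v_{n,x}+f_{1,n,x}\to 0$, hence $\|u_{n,x}\|\to 0$. To recover $\theta_{n}$ I would test the temperature equation $i\lambda_{n}\theta_{n}+\kappa q_{n,x}+\eta v_{n,x}=f_{3,n}$ against $\bar\theta_{n}$, integrate the term $\kappa\int q_{n,x}\bar\theta_{n}$ by parts (the boundary terms vanish precisely because $q_{n}(0)=q_{n}(L)=0$), and substitute the flux equation $\kappa\theta_{n,x}=\tau f_{4,n}-(i\lambda_{n}\tau+\beta)q_{n}$; isolating the imaginary part produces $\lambda_{n}\|\theta_{n}\|^{2}=\lambda_{n}\tau\|q_{n}\|^{2}+o(1)$, so that $\|\theta_{n}\|^{2}=\tau\|q_{n}\|^{2}+o(1)\to 0$. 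Finally, testing the mechanical equation against $\bar v_{n}$ and integrating by parts (using $v_{n}(0)=v_{n}(L)=0$) bounds $|\lambda_{n}|\,\|v_{n}\|^{2}$ by the already controlled quantities $\|u_{n,x}\|$, $\|v_{n,x}\|$, $\|\theta_{n}\|$ and $\|F_{n}\|$, all $o(1)$, giving $\|v_{n}\|\to 0$. Collecting these, $\|U_{n}\|_{\mathcal{H}}\to 0$, contradicting $\|U_{n}\|_{\mathcal{H}}=1$.

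The main obstacle is the control of the thermal component in the high-frequency regime $|\lambda_{n}|\to\infty$: because Cattaneo's law dissipates only $q$ in $L^{2}$ and not its gradient, the flux equation a priori relates $\theta_{n,x}$ to $q_{n}$ through a factor of size $|\lambda_{n}|$, and a careless estimate would leave a surviving term $|\lambda_{n}|\,\|q_{n}\|$ even though $\|q_{n}\|\to 0$. The boundary condition \eqref{bc-diri-q} is exactly what rescues the argument: it makes the integration by parts $\int q_{n,x}\bar\theta_{n}=-\int q_{n}\bar\theta_{n,x}$ free of boundary contributions and, after substitution of the flux equation, converts the dangerous balance into the gain $\|\theta_{n}\|^{2}\sim\tau\|q_{n}\|^{2}$. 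This mechanism is unavailable for the Dirichlet temperature conditions \eqref{105}, where one is forced through a Poincar\'e estimate to bound $\|\theta_{n}\|$ by $\|\theta_{n,x}\|$, which is of order $|\lambda_{n}|\,\|q_{n}\|$ and need not vanish---hence only the polynomial rate of Theorem \ref{theorem3001} there. Checking that every boundary term genuinely disappears and that all cross terms are $o(1)$ uniformly in $n$ is the technical crux.
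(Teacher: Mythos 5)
There is a genuine gap, and it sits exactly where the theorem's phrase ``for suitable initial data'' does its work. With the boundary conditions \eqref{bc-diri-q}, the operator $\mathcal{A}$ on the \emph{full} space $\mathcal{H}$ has a nontrivial kernel: for any constant $c$, the vector $U_c=(0,0,c,0)$ belongs to $\mathcal{D}_2$ (no boundary condition is imposed on $\theta$ there) and satisfies $\mathcal{A}U_c=0$, because $\theta$ enters the operator \eqref{202} only through its derivative $\theta_x$. Physically: with zero heat flux at both ends, a uniform temperature is a stationary state whose energy never decays. Consequently your Step 1 is false at $\lambda=0$: after the dissipation forces $w_x\equiv 0$ and $q\equiv 0$, ``feeding back'' the equations leaves $\theta$ equal to an arbitrary constant, not zero. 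Hence $i\mathbb{R}\subset\varrho(\mathcal{A})$ fails, no uniform resolvent bound can hold, and exponential stability is simply false on all of $\mathcal{H}$. The missing ingredient is the paper's Remark \ref{rem:mean-theta}: under \eqref{bc-diri-q} the mean $\int_0^L\theta\,dx$ is conserved, so one must restrict to the invariant subspace $\{\int_0^L\theta\,dx=0\}$ (equivalently, subtract the mean from the data); this restriction \emph{is} the ``suitable initial data'', and only on this subspace do your eigenvalue and resolvent arguments have a chance. Since your contradiction argument handles bounded $\lambda_n$ by invoking continuity of the resolvent on $i\mathbb{R}\subset\varrho(\mathcal{A})$, the collapse of that inclusion at $\lambda=0$ breaks the whole low-frequency part of the proof.

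A second, independent flaw in Step 1: the resolvent of $\mathcal{A}$ is \emph{not} compact, so Rellich cannot be used to get a discrete spectrum. Because of the Kelvin--Voigt term, membership in $\mathcal{D}_2$ only constrains the combination $p\,u_x+2\,\delta\,w_x$ to lie in $H^1(0,L)$, while $w_x$ is merely in $L^2(0,L)$; therefore $u_x$ gains no regularity beyond $L^2$, and bounded sets in the graph norm are not precompact in $\mathcal{H}$, whose first component carries the $H^1_0$ norm. One must instead prove $i\mathbb{R}\subset\varrho(\mathcal{A})$ directly, e.g.\ by deriving the uniform estimate $\|U\|_{\mathcal{H}}\leq C\,\|F\|_{\mathcal{H}}$ for solutions of the resolvent system (as the paper does) and combining it with $0\in\varrho(\mathcal{A})$ on the zero-mean subspace through a standard continuation argument. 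I note that your high-frequency mechanism is correct and genuinely different from the paper's: you multiply the heat equation by $\overline{\theta}$ and use $q(0)=q(L)=0$ to integrate by parts, extracting $\|\theta\|^2=\tau\|q\|^2+o(1)$ from the imaginary part, whereas the paper multiplies the Cattaneo equation \eqref{resol4} by the primitive $\int_0^x\overline{\theta}\,dy$ and uses the zero mean of $\theta$ to kill the boundary terms, obtaining a direct (non-contradiction) bound valid for all $\lambda$ at once. Once the subspace reduction and the $i\mathbb{R}\subset\varrho(\mathcal{A})$ step are repaired, your estimate scheme would yield the theorem; as written, it does not.
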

\noindent The proof will not use the second order energy, as it is generally done, but rather a semigroup point of view, with  a result due to  Pr\"{u}ss \cite{Pr84} and
Huang \cite{834231}:
\begin{theorem}{(Pr\"{u}ss)}\label{Pruss}
Let $({\cal S}(t))_{t\geqslant 0}$ be a $C_{0}$-semigroup
on a Hilbert space ${\cal H}$ generated by an operator ${\cal A}.$
The semigroup is exponentially stable if and only if
\begin{eqnarray*}
i\,\mathbb{R}\subset\varrho({\cal A}),\quad\mbox{and}\quad
\|(i\,\lambda\,I - {\cal A})^{-1}\|_{{\cal L}({\cal H})}
\leqslant C, \qquad\forall\,\lambda\in\mathbb{R}.
\end{eqnarray*}
\end{theorem}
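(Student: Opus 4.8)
The plan is to prove the two implications separately. The forward implication (exponential stability $\Rightarrow$ resolvent estimate) is elementary, while the converse is the substantial part, and is exactly where the Hilbert space structure of $\mathcal{H}$ becomes indispensable.

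\emph{Necessity.} Suppose $\|\mathcal{S}(t)\|_{\mathcal{L}(\mathcal{H})}\le M e^{-\omega t}$ with $\omega>0$, so that the growth bound of the semigroup is negative. Then $\{\,\mathrm{Re}\,s>-\omega\,\}$, and in particular $i\,\mathbb{R}$, lies in $\varrho(\mathcal{A})$, and the resolvent is recovered as the Laplace transform of the semigroup, $(sI-\mathcal{A})^{-1}x=\int_0^\infty e^{-st}\mathcal{S}(t)x\,dt$ for $\mathrm{Re}\,s\ge 0$. Taking norms with $s=i\lambda$ gives $\|(i\lambda I-\mathcal{A})^{-1}\|\le\int_0^\infty\|\mathcal{S}(t)\|\,dt\le M/\omega$ uniformly in $\lambda\in\mathbb{R}$, which is the stated bound.

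\emph{Sufficiency.} Assume $i\,\mathbb{R}\subset\varrho(\mathcal{A})$ and $M:=\sup_{\lambda\in\mathbb{R}}\|(i\lambda I-\mathcal{A})^{-1}\|<\infty$. I would reduce exponential stability to a quadratic (Datko-type) integrability estimate: it suffices to produce a constant $C$ with
\[ \int_0^\infty\|\mathcal{S}(t)x\|^2\,dt\le C\,\|x\|^2,\qquad\forall\,x\in\mathcal{H}. \]
Writing $\|\mathcal{S}(t)\|\le M_0 e^{\omega_0 t}$, this estimate first forces the semigroup to be bounded: from $\mathcal{S}(t)=\mathcal{S}(t-s)\mathcal{S}(s)$ one gets $e^{-2\omega_0(t-s)}\|\mathcal{S}(t)x\|^2\le M_0^2\|\mathcal{S}(s)x\|^2$, and integrating in $s$ over $(0,t)$ bounds $\|\mathcal{S}(t)x\|$ uniformly in $t$. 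With $\bar M:=\sup_t\|\mathcal{S}(t)\|$ the same splitting then yields $t\,\|\mathcal{S}(t)x\|^2=\int_0^t\|\mathcal{S}(t)x\|^2\,ds\le\bar M^2\int_0^t\|\mathcal{S}(s)x\|^2\,ds\le\bar M^2 C\|x\|^2$, so $\|\mathcal{S}(t_0)\|<1$ once $t_0>\bar M^2 C$, whence exponential decay follows from the semigroup property.

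To obtain the quadratic estimate I would pass to the frequency side via Plancherel. A Neumann series argument extends the bound off the axis: since $(\alpha+i\lambda-\mathcal{A})=(i\lambda-\mathcal{A})\,[\,I+\alpha(i\lambda-\mathcal{A})^{-1}\,]$, the strip $\{|\mathrm{Re}\,s|<1/M\}$ lies in $\varrho(\mathcal{A})$ with $\|(sI-\mathcal{A})^{-1}\|\le M/(1-|\mathrm{Re}\,s|\,M)$, so $s\mapsto(sI-\mathcal{A})^{-1}x$ is analytic and uniformly bounded on a strip about $i\,\mathbb{R}$. For $\alpha$ larger than the growth bound, $t\mapsto e^{-\alpha t}\mathcal{S}(t)x$ lies in $L^2(0,\infty;\mathcal{H})$ with Fourier transform $\lambda\mapsto((\alpha+i\lambda)I-\mathcal{A})^{-1}x$, so Plancherel gives
\[ \int_0^\infty e^{-2\alpha t}\|\mathcal{S}(t)x\|^2\,dt=\frac{1}{2\pi}\int_{-\infty}^\infty\big\|\,((\alpha+i\lambda)I-\mathcal{A})^{-1}x\,\big\|^2\,d\lambda. \]
The quadratic estimate would then follow by letting $\alpha\downarrow 0$ — the left side increases monotonically to $\int_0^\infty\|\mathcal{S}(t)x\|^2\,dt$ — provided the right side stays bounded by $C\|x\|^2$ uniformly for small $\alpha>0$.

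The main obstacle is precisely this last uniform bound on $\int_{\mathbb{R}}\|((\alpha+i\lambda)I-\mathcal{A})^{-1}x\|^2\,d\lambda$, since the pointwise resolvent estimate on the strip is not by itself integrable in $\lambda$. This is the genuinely Hilbert-space step: one must exploit the analyticity and the $L^2$ boundary behaviour of the resolvent (a vector-valued Hardy-space/Paley--Wiener argument) and, to avoid a spurious dependence on $\|\mathcal{A}x\|$, carry out the estimate symmetrically for $\mathcal{A}$ and its adjoint $\mathcal{A}^*$ — the hypotheses transfer to $\mathcal{A}^*$ because $\big((i\lambda I-\mathcal{A})^{-1}\big)^*=(-i\lambda I-\mathcal{A}^*)^{-1}$ — combining the two $L^2$ bounds through the identity $\langle\mathcal{S}(2t)x,y\rangle=\langle\mathcal{S}(t)x,\mathcal{S}^*(t)y\rangle$. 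It is exactly here that $\mathcal{H}$ being a Hilbert space is used: the statement is known to fail on general Banach spaces, where a bounded resolvent on $i\,\mathbb{R}$ does not force exponential stability.
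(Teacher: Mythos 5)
The paper itself contains no proof of Theorem \ref{Pruss}: it is imported as a known result from Pr\"uss \cite{Pr84} and Huang \cite{834231}, so your attempt can only be compared with the standard arguments in the literature. Your plan is indeed the standard one, and two of its pieces are correct as written: the necessity half (Laplace-transform representation of the resolvent, giving the bound $M/\omega$), and the Datko reduction of exponential stability to the quadratic estimate $\int_0^\infty\|\mathcal{S}(t)x\|^2\,dt\le C\|x\|^2$. The genuine gap is the step ``letting $\alpha\downarrow 0$''. The Plancherel identity you invoke is valid only for $\alpha$ strictly above the growth bound $\omega_0$, because only there is $e^{-\alpha t}\mathcal{S}(t)x$ known to lie in $L^2$ with Fourier transform $R(\alpha+i\lambda)x:=((\alpha+i\lambda)I-\mathcal{A})^{-1}x$; in the only nontrivial case $\omega_0>0$, the range $0<\alpha\le\omega_0$ lies outside its validity, and the resolvent need not even exist on $\{1/M<\mathrm{Re}\,s<\omega_0\}$. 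Your safeguard ``provided the right side stays bounded'' does not repair this, and in fact the theorem in the literal axis-only form stated here is \emph{false} for general semigroups: take $\mathcal{H}=\mathbb{C}$, $\mathcal{A}=1$, $\mathcal{S}(t)=e^{t}$. Then $i\,\mathbb{R}\subset\varrho(\mathcal{A})$, $\|(i\lambda I-\mathcal{A})^{-1}\|=(1+\lambda^2)^{-1/2}\le 1$, and $\int_{\mathbb{R}}\|R(\alpha+i\lambda)x\|^2\,d\lambda=\pi|x|^2/(1-\alpha)$ stays bounded as $\alpha\downarrow 0$ --- your hypothesis for passing to the limit holds --- yet the semigroup grows exponentially: below $\omega_0=1$ the two sides of your identity are simply no longer equal. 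So no estimate on the frequency side alone can close the argument.

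What is true, and how to close your sketch: the Gearhart--Pr\"uss--Huang theorem requires either the resolvent bound on the \emph{closed right half-plane}, $\sup_{\mathrm{Re}\,s\ge 0}\|(sI-\mathcal{A})^{-1}\|<\infty$ --- and there your Hardy-space instinct is exactly the bridging device: $R(\cdot)x$ then belongs to the vector-valued $H^2$ of the half-plane, hence is the Laplace transform of some $g\in L^2(0,\infty;\mathcal{H})$, which coincides with $\mathcal{S}(\cdot)x$ by uniqueness of Laplace transforms, yielding Datko's estimate --- or the additional hypothesis that the semigroup is bounded. The latter is what legitimizes the use of Theorem \ref{Pruss} in this paper: by the dissipativity computation \eqref{205} and Lumer--Phillips, the semigroup is contractive, so $\omega_0\le 0$, $e^{-\alpha t}\mathcal{S}(t)x\in L^2$ for \emph{every} $\alpha>0$, and your Plancherel identity holds on all of $\alpha>0$. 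The uniform bound you flagged as the main obstacle then follows from two applications of the resolvent identity, with no adjoint or Paley--Wiener machinery: from $R(i\lambda)x=R(1+i\lambda)x+R(i\lambda)R(1+i\lambda)x$ and Plancherel at the fixed abscissa $\alpha=1$ one gets $\int_{\mathbb{R}}\|R(i\lambda)x\|^2\,d\lambda\le(1+M)^2\pi M_0^2\|x\|^2$, while $R(\alpha+i\lambda)x=R(i\lambda)x-\alpha R(\alpha+i\lambda)R(i\lambda)x$ together with the Hille--Yosida bound $\|R(\alpha+i\lambda)\|\le M_0/\alpha$ gives $\|R(\alpha+i\lambda)x\|\le(1+M_0)\|R(i\lambda)x\|$ uniformly in $\alpha>0$; monotone convergence then delivers the quadratic estimate and your Datko argument finishes. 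In short: add the hypothesis ``$(\mathcal{S}(t))_{t\ge 0}$ bounded'' (or the half-plane resolvent bound) and your proof closes; without it, the limiting step fails irreparably, as the scalar counterexample shows. Your closing remark that the Hilbert space structure is essential is correct: the result fails on general Banach spaces even under the corrected hypotheses.
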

\noindent Let us conclude the introduction by an important remark, related to the structure of system \eqref{101}-\eqref{104}. 
\begin{remark}\label{rem:mean-theta}
By formally integrating equation \eqref{102} over $(0,L)$, we get, for all $t > 0$:
$$ \frac{d}{d t}\ \int_{0}^L \ \theta(x,t) dx \ = \ \kappa (q(0,t) - q (L,t)) + \eta (u_t (0,t) - u_t (L,t)).$$
\noindent Therefore, we note that for boundary conditions \eqref{bc-diri-q}, the mean of $\theta$ is conserved in time, so that we may only study the problem for functions such that $\int_0^L \theta d x = 0$. Moreover, note that this can be required at least for $L^2$ functions since $(0,L)$ is bounded ($L^1 \subset L ^2$). This will be useful to investigate the exponential stability of the semigroup associated. 

\noindent However, for the other boundary conditions \eqref{105}, we would need observability estimates on the boundary terms for the unknown $q$ in order to control the term $\int |\theta |^2$. 
\end{remark}
\noindent The rest of the paper is organized as follows: Section \ref{sec:wellposedness} outlines briefly the notations and the well-posedness of the system is established with the semigroup approach. In Section \ref{sec:polin_decay}, we consider the boundary conditions \eqref{105} and show the polynomial stability of smooth solutions, using the energy method, and multiplier technique. Finally, in Section \ref{sec:expo_decay}, we show that for the boundary conditions \eqref{bc-diri-q}, the semigroup is exponentially stable, by studying the resolvent system.

\renewcommand{\theequation}{\thesection.\arabic{equation}}
\setcounter{equation}{0}
\section{Setting of the Semigroup}\label{sec:wellposedness}

In this section, we obtain existence and uniqueness of the
solution to the coupled system \eqref{101}-\eqref{104}, with either boundary conditions \eqref{105} or  \eqref{bc-diri-q},  using the semigroup approach.

\subsection{Notations}
Denote by $L^{2} (0,L)$ the classical set of $L^2$ functions over the interval $(0,L)$, equipped with the inner product and induced norm:
\begin{equation*}
\langle u,\,v\rangle _{L^{2}}=\int_{0}^{L}u\,\overline{v}
\,dx,\qquad \|u\|_{L^{2}}^{2}=\int_{0}^{L}|u|^{2}\ dx,
\end{equation*}
\noindent where we omit in the definition of the scalar product and norm the spatial space, here the interval $(0,L)$, for sake of clarity. Denote too by $H^1_0 (0,L)$ the Sobolev space of  homogeneous $H^1$ functions over $(0,L)$, equipped with its standard inner product. 
Let us now introduce the phase space 
$$\mathcal{H}=H_{0}^{1}(0,\,L)\times
L^{2}(0,\,L)\times L^{2}(0,\,L) \times L^{2}(0,\,L),.$$
\noindent  We define an inner product on $\mathcal{H}$: for $U^i =(u^i, w^i. \theta^i, q^i)$, $i = 1,2$, let
\begin{multline}\label{203}
\langle U^1,\,U^2\rangle_{\mathcal{H}}=
\int_{0}^{L}p(x)\,u^1_{x}\,\overline{u^2}_{x}\ dx +
\int_{0}^{L}m(x)\,w^1\,\overline{w^2}\ dx 
\\
+ \int_{0}^{L}\theta^1\,\overline{\theta^2}\ dx +
\tau\int_{0}^{L}q^1\,\overline{q^2}\ dx.
\end{multline}
\noindent Indeed, due to the hypothesis on $m$, $\delta$, $p$ \eqref{hyp-p-delta-m}, this provides an inner product on $\mathcal{H}$ and makes it a Hilbert space, equipped with the induced norm:
\begin{equation*}
\|U\|_{{\cal H}}^{2} =
\|\sqrt{p(x)}\,u_{x}\|_{L^{2}}^{2} +
\|\sqrt{m(x)}\,w\|_{L^{2}}^{2} +
\|\theta\|_{L^{2}}^{2} + \tau\,\|q\|_{L^{2}}^{2}.
\end{equation*}
\noindent Taking $u_{t}(x,\,t)=w(x,\,t),$ the initial boundary value problem can be reduced to the following abstract Cauchy problem for a first-order evolution equation
\begin{eqnarray}
\label{201}\frac{dU}{dt} = {\cal A}\,U, \qquad U(0)=U_{0}, \qquad
\forall \;t>0,
\end{eqnarray}
with the initial data
$\,U_{0}=(u_{0},\,w_{0},\,\theta_{0},\,q_{0})\in {\cal D}(\cal A),$ where the operator (formal up to now) 
 $\mathcal{A}:{\cal D}({\cal
A})\subseteq \mathcal{H}\rightarrow \mathcal{H}$ is given by
\begin{equation}
\label{202}\mathcal{A}\left(
\begin{array}{c}
u \\
w \\
\theta \\
q
\end{array}
\right) =\left(
\begin{array}{c}
w \\
\frac{1}{m(x)}\left(p(x)\,u_{x} + 2\,\delta(x)\,w_{x} - \eta\,\theta\right)_{x} \\
-\kappa\,q_{x} - \eta\,w_{x}\\
\frac{-1}{\tau}\left(\kappa\,\theta_{x} + \beta\,q\right)
\end{array}
\right).
\end{equation}
\noindent The domain of the operator, ${\cal D} ({\cal A})$, depends on the boundary conditions under consideration. 
For the boundary conditions \eqref{105}, we define:
\begin{multline}\label{204}
{\cal D}({\cal A}) \ = \ \mathcal{D}_1 \ = \  \big\{U=(u,\,w,\,\theta,\,q)\in {\cal H}:\
  w\in H_{0}^{1}(0,\,L),
  \\
  p(x)\,u_{x} + 2\,\delta(x)\,w_{x}\in
H^{1}(0,\,L), \ \theta\in H_0^{1}(0,\,L),\ q\in
H^{1}(0,\,L)\big\}.
\end{multline}
For the boundary conditions \eqref{bc-diri-q}, we define:
\begin{multline}\label{204-bc-diri-q}
{\cal D}({\cal A}) \ = \ \mathcal{D}_2 \ = \  \big\{U=(u,\,w,\,\theta,\,q)\in {\cal H}:\
  w\in H_{0}^{1}(0,\,L),
  \\
  p(x)\,u_{x} + 2\,\delta(x)\,w_{x}\in
H^{1}(0,\,L), \ \theta\in H^{1}(0,\,L),\ q\in
H_0^{1}(0,\,L)\big\}.
\end{multline}

We will now establish the well-posedness of the abstract Cauchy problem \eqref{201} thanks to the semigroup theory, in particular the Lummer-Phillips lemma (see for example \cite{pa1}).

\subsection{Well-posedness}
\begin{theorem}
\label{theo2.2} For any $\mathbf{U}_{0}\in\mathcal{H}$, there exists
a unique solution $\mathbf{U}(t)$ to the system
\eqref{101}-\eqref{104} with boundary conditions  \eqref{105} (resp. \eqref{bc-diri-q}), satisfying
\begin{equation*}
\mathbf{U}\in C([0,\,\infty[:\,\mathcal{H}).
\end{equation*}
If moreover, $\,\mathbf{U}_{0}\in{\cal D}_1$, given  by \eqref{204} (resp. $\mathcal{D}_2$, given by \eqref{204-bc-diri-q}), then
\begin{multline*}
 \mathbf{U}\in C^{1}([0,\,\infty[:\, \mathcal{H})\cap
C([0,\,\infty[:\,{\cal D}_1) \\
\Big( \hbox{ resp. } \, C^{1}([0,\,\infty[:\, \mathcal{H})\cap
C([0,\,\infty[:\, {\cal D}_2) \Big). 
\end{multline*}
\end{theorem}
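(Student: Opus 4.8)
\emph{The plan} is to apply the Lumer--Phillips theorem \cite{pa1}: I will show that $\mathcal{A}$, endowed with domain $\mathcal{D}_1$ (resp. $\mathcal{D}_2$) according to the boundary conditions, is a densely defined dissipative operator on $\mathcal{H}$ for which the range condition $\mathrm{Ran}(\lambda I - \mathcal{A}) = \mathcal{H}$ holds for some (equivalently, all) $\lambda > 0$. The Lumer--Phillips theorem then guarantees that $\mathcal{A}$ generates a $C_0$-semigroup of contractions $(\mathcal{S}(t))_{t\geq 0}$ on $\mathcal{H}$. Setting $\mathbf{U}(t) = \mathcal{S}(t)\mathbf{U}_0$ produces the mild solution $\mathbf{U}\in C([0,\infty);\mathcal{H})$ for arbitrary $\mathbf{U}_0\in\mathcal{H}$, while the standard regularity theory for generators yields, when $\mathbf{U}_0\in\mathcal{D}(\mathcal{A})$, the classical solution $\mathbf{U}\in C^1([0,\infty);\mathcal{H})\cap C([0,\infty);\mathcal{D}(\mathcal{A}))$ claimed in the statement.

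\emph{Dissipativity.} For $U = (u,w,\theta,q)\in\mathcal{D}(\mathcal{A})$, I would compute $\mathrm{Re}\,\langle\mathcal{A}U,U\rangle_{\mathcal{H}}$ directly from \eqref{202} and \eqref{203}. Integrating by parts in the terms coming from the second and third components and using the boundary conditions to annihilate all boundary contributions --- this is the only place where the two choices \eqref{105} and \eqref{bc-diri-q} enter, and both do make the boundary terms vanish (the factor $\overline{w}$ vanishes at the endpoints in either case, and the term $[q\overline{\theta}]_0^L$ is killed by $\theta\in H_0^1$ for \eqref{105} and by $q\in H_0^1$ for \eqref{bc-diri-q}) --- the skew-symmetric terms carrying $p$, $\eta$ and $\kappa$ each cancel in conjugate pairs (contributing only to the imaginary part), leaving precisely the dissipation rate already recorded in Lemma \ref{lemma101-102}:
\begin{equation*}
\mathrm{Re}\,\langle\mathcal{A}U,U\rangle_{\mathcal{H}} \ = \ -\,2\int_0^L \delta(x)\,|w_x|^2\,dx \ -\ \beta\int_0^L |q|^2\,dx \ \leq \ 0 ,
\end{equation*}
the sign being guaranteed by \eqref{hyp-p-delta-m}. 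Density of $\mathcal{D}(\mathcal{A})$ in $\mathcal{H}$ is immediate, since both domains contain the smooth compactly supported fields compatible with the boundary conditions.

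\emph{Range condition.} Fix $\lambda>0$ and $F=(f_1,f_2,f_3,f_4)\in\mathcal{H}$; I seek $U\in\mathcal{D}(\mathcal{A})$ solving $(\lambda I - \mathcal{A})U = F$. The first and fourth lines of this system are algebraic and give
\begin{equation*}
w \ = \ \lambda u - f_1 , \qquad q \ = \ \big(\lambda + \tfrac{\beta}{\tau}\big)^{-1}\Big(f_4 - \tfrac{\kappa}{\tau}\,\theta_x\Big) ,
\end{equation*}
which I substitute into the remaining two lines, reducing the problem to a coupled second-order elliptic system for the pair $(u,\theta)$. I would then write this system in weak form and solve it by the Lax--Milgram lemma. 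The test spaces reflect the boundary conditions: for \eqref{105} one works on $H_0^1(0,L)\times H_0^1(0,L)$, where $\theta=0$ at the endpoints is \emph{essential}; for \eqref{bc-diri-q} one works on $H_0^1(0,L)\times H^1(0,L)$, where instead $q=0$ at the endpoints appears as the \emph{natural} boundary condition produced by testing against functions not vanishing at the boundary. In either case the form must be assembled so as never to differentiate the $L^2$-data $f_3,f_4$ (the factor $f_4$ is kept paired with $\psi_x$, and $q_x$ is integrated by parts), which is legitimate since $f_1\in H_0^1$ already supplies the one derivative that is unavoidable.

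\emph{The main obstacle} is the verification of coercivity of this form. After elimination, the principal coefficients are $p+2\lambda\delta$ for $u$ and $\kappa^2/\big(\tau(\lambda+\beta/\tau)\big)$ for $\theta$, both bounded below by a positive constant thanks to \eqref{hyp-p-delta-m}, and the zeroth-order coefficients $\lambda^2 m$ and $\lambda$ are positive as well; the only terms of indefinite sign are the coupling terms in $\eta$. These, after scaling the $\theta$-equation by $\lambda^{-1}$, become mutually conjugate and hence contribute only to the imaginary part of the diagonal form, so that they drop out of $\mathrm{Re}\,a((u,\theta),(u,\theta))$ and coercivity follows from the positive diffusion and zeroth-order terms alone (note that the $\lambda$-term controls the full $H^1$-norm of $\theta$, so no Poincar\'e inequality is needed for $\theta$ in the case \eqref{bc-diri-q}). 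Once Lax--Milgram furnishes a weak solution $(u,\theta)$, elliptic regularity bootstraps it to the regularity demanded by the domain --- in particular $p u_x + 2\delta w_x\in H^1(0,L)$, together with $q\in H^1$ (resp. $q\in H_0^1$) --- and back-substitution recovers $w$ and $q$, showing that $\lambda I-\mathcal{A}$ is onto. The book-keeping required to treat the two boundary conditions in parallel, and the emergence of $q=0$ as a natural condition for \eqref{bc-diri-q} (cf. Remark \ref{rem:mean-theta}), is the part most demanding of care.
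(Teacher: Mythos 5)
Your proposal is correct, and while it opens exactly as the paper does --- the dissipativity computation reproducing \eqref{205} is identical, and both arguments close with Lumer--Phillips --- the surjectivity step takes a genuinely different route. The paper proves $0\in\varrho(\mathcal{A})$ by solving $\mathcal{A}U=F$ \emph{explicitly}: at $\lambda=0$ the one-dimensional system decouples into a cascade of first-order ODEs in $x$ (first $w=f_1$, then $q$, $\theta$ and finally $p\,u_x+2\,\delta\,w_x$ are obtained by successive quadratures, the integration constant $q(0)$ being fixed so that $\theta$ matches its boundary conditions), and then invokes generation via dissipativity plus $0\in\varrho(\mathcal{A})$. You instead verify the textbook Lumer--Phillips range condition at $\lambda>0$: algebraic elimination of $w$ and $q$, a reduced second-order system for $(u,\theta)$ solved by Lax--Milgram, with coercivity coming from the coefficients $p+2\lambda\delta$, $\lambda^2 m$, $\kappa^2/(\lambda\tau+\beta)$ and $\lambda$, the $\eta$-couplings being purely imaginary on the diagonal (after an integration by parts using $u\in H_0^1$, as you implicitly do), followed by an elliptic bootstrap into $\mathcal{D}_1$ or $\mathcal{D}_2$. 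The paper's route buys explicit resolvent formulas but is tied to one dimension and to $\lambda=0$; yours is heavier on bookkeeping but transfers verbatim to higher dimensions and rougher coefficients, and correctly identifies $q=0$ as the \emph{natural} condition in the test space $H_0^1\times H^1$. More significantly, working at $\lambda>0$ quietly repairs a defect in the paper's treatment of the second case: for the boundary conditions \eqref{bc-diri-q} (domain $\mathcal{D}_2$, dismissed by the paper as ``similar, even easier''), $0$ is in fact an \emph{eigenvalue} of $\mathcal{A}$ --- take $u=w=q=0$ and $\theta$ a nonzero constant, which lies in $\mathcal{D}_2$ with $\mathcal{A}U=0$, consistently with the conservation of $\int_0^L\theta\,dx$ noted in Remark \ref{rem:mean-theta}; equivalently, solving $\mathcal{A}U=F$ with $q\in H_0^1$ forces the compatibility condition $\int_0^L f_3\,dx=0$, which fails for generic $F\in\mathcal{H}$. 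So $0\in\varrho(\mathcal{A})$ is false on all of $\mathcal{H}$ in that case, and the generation argument must be run at $\lambda>0$ (or on the zero-mean subspace) --- exactly as you do, where, as you observe, the $\lambda\int_0^L|\theta|^2$ term supplies coercivity without any Poincar\'e inequality for $\theta$.
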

\begin{proof}
 It suffices to show that the operator ${\cal A}$ is the generator infinitesimal of a $C_0$-semigroup of contractions on ${\cal H}$. Let us first show that ${\cal A}$ is dissipative. For $ U \in \mathcal{D} (A)$ (either $\mathcal{D}_1$ or $\mathcal{D}_2$), we compute:
\begin{eqnarray*}
\lefteqn{\left\langle\mathcal{A}U,\,U\right\rangle_{\mathcal{H}} =
\int_{0}^{L}p(x)\,w_{x}\,\overline{u}_{x}\ dx  +
\int_{0}^{L}(p(x)\,u_{x} + 2\,\delta(x)\,w_{x} -
\eta\,\theta)_{x}\,\overline{w}\ dx }
\\
& = & 2\,i\,Im\int_{0}^{L}p(x)\,w_{x}\,\overline{u}_{x}\ dx -
2\,i\,\eta\,Im\int_{0}^{L}\theta\,\overline{w}_{x}\ dx +
2\,i\,\kappa\,Im\int_{0}^{L}\overline{\theta}\,q_{x}\ dx     \\
&  & -\ 2\int_{0}^{L}\delta(x)\,|w_{x}|^{2}\ dx -
\beta\int_{0}^{L}|q|^{2}\ dx.
\end{eqnarray*}
Note that the same result is obtained whatever the boundary conditions under consideration. Taking the real part we obtain
\begin{eqnarray}
\label{205}Re\left\langle\mathcal{A}U,\,U\right\rangle_{\mathcal{H}}
= -\ 2\int_{0}^{L}\delta(x)\,|w_{x}|^{2}\ dx -
\beta\int_{0}^{L}|q|^{2}\leq 0 .
\end{eqnarray}
Thus the operator ${\cal A}$ is dissipative.  Next, $\mathcal{D}_i$, $i = 1, 2$ are obviously dense in $\mathcal{H}$ and  $\mathcal{A}$ is a closed operator.  It remains to show that $0\in\varrho({\cal A})$, the resolvent of the operator $\mathcal{A}$. Given $F=(f_{1},\,f_{2},\,f_{3},\,f_{4})\in {\cal H},$ we must show
that there exists a unique $U=(u,\,w,\,\theta,\,q)$ in ${\cal
D}({\cal A})$ such that ${\cal A}U=F,$ that is,
\begin{eqnarray}
\label{206}&  & w = f_{1}\quad\mbox{in}\quad H_{0}^{1}(0,\,L) \\
\label{207}&  & \left[p(x)\,u_{x} + 2\,\delta(x)\,w_{x} -
\eta\,\theta\right]_{x} = m(x)\,f_{2}
\quad\mbox{in}\quad L^{2}(0,\,L)  \\
\label{208}&  & \kappa\,q_{x} + \eta\,w_{x} = f_{3}
\quad\mbox{in}\quad L^{2}(0,\,L)  \\
\label{209}&  & \kappa\,\theta_{x} + \beta\,q = \tau\,f_{4}
\quad\mbox{in}\quad L^{2}(0,\,L).
\end{eqnarray}
\noindent We do the proof for the domain given by \eqref{204}, that is for boundary conditions \eqref{105}, since the other case can be done in a similar way, even easier.
First, replacing \eqref{206} into \eqref{208} we have
\begin{eqnarray}
\label{210}\kappa\,q_{x} = \eta\,f_{1x} + f_{3}\quad\mbox{in}\quad
L^{2}(0,\,L).
\end{eqnarray}
Therefore there is a unique $q\in H^{1}(0,\,L)$ satisfying
\eqref{210} given by
\begin{eqnarray}
\label{211}&  & \kappa\,q(x) =\kappa\,q(0) + \eta\,f_{1}(x) +
\int_{0}^{x}f_{3}(s)\ ds\quad\mbox{in}\quad [0,\,L]
\end{eqnarray}
where
\begin{eqnarray*}
q(0)=-\ \frac{\eta}{L}\int_{0}^{L}f_{1}(s)\ ds -
\frac{1}{L}\int_{0}^{L}\left(\int_{0}^{y}f_{3}(s)\ ds\right)dy -
\frac{\tau\,\kappa}{\beta\,L}\int_{0}^{L}f_{4}(s)\ ds.
\end{eqnarray*}
Moreover replacing \eqref{211} into \eqref{209} we have
\begin{eqnarray}
\label{212}&  & \kappa\,\theta_{x} =\beta \,q(0) + \frac{\beta\,
\eta}{\kappa}\;f_{1} + \frac{\beta}{\kappa}\int_{0}^{x}f_{3}(s)\ ds
+ \tau\,f_{4}
\end{eqnarray}
and it results that
\begin{eqnarray*}
\kappa\,\theta = \dfrac{\beta}{\kappa}\,q(0)\,x + \frac{\beta\,
\eta}{\kappa}\int_{0}^{x}f_{1}(s)\ ds +
\frac{\beta}{\kappa}\int_{0}^{x}\left(\int_{0}^{y}f_{3}(s)\
ds\right)dy + \tau\int_{0}^{x}f_{4}(s)\ ds
\end{eqnarray*}
belongs to $H_{0}^{1}(0,\,L)\cap H^{2}(0,\,L).$ On the other hand,
replacing \eqref{206} into \eqref{207} we have
\begin{eqnarray}
\label{213}-\ \eta\,\theta_{x} + (p(x)\,u_{x} +
2\,\delta(x)\,w_{x})_{x}= m(x)\,f_{2} \quad\mbox{in}\quad
H_{0}^{-1}(0,\,L).
\end{eqnarray}
Moreover, it is easy to verify that $\|U\|_{{\cal H}}\leq \|F\|_{{\cal H}}$.
Therefore $0\in \varrho({\cal A}).$ Then, applying the well known Lumer-Phillips theorem \cite{pa1}, ${\cal A}$ generates a semigroup of contraction and the proof of Theorem \ref{theo2.2} is achieved. 
\end{proof}

\renewcommand{\theequation}{\thesection.\arabic{equation}}
\setcounter{equation}{0}
\section{Asymptotic behaviour for the clamped structure with constant temperature on the boundary. }\label{sec:polin_decay}

With the notations of the previous section, we can reformulate precisely Theorem \ref{theorem3001} as follows. 
\begin{theorem}
\label{theorem3001-precise}
 For any $n \in \mathbb{N} - \{0\}$, let an initial datum $U_0 \ \in \mathcal{D} (\mathcal{A}^{n+1})$, the strong solution to system  \eqref{101}--\eqref{104} complemented by boundary conditions \eqref{105} satisfies, for all $t > 0$:
\begin{equation}\label{3000-n}
{\cal E}_{n}(t) \leq \,\frac{[{\cal E}_{n}(0)
+ {\cal E}_{n+1}(0)]}{t}.
\end{equation}
\end{theorem}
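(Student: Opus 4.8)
The plan is to run an energy/multiplier argument on the $n$-th time-differentiated system. Since \eqref{101}--\eqref{103} is linear with time-independent coefficients, the functions $v := \partial_t^n u$, $\theta^{(n)} := \partial_t^n \theta$ and $q^{(n)} := \partial_t^n q$ solve the very same system with the same boundary conditions \eqref{105}; in particular $v = v_t = 0$ and $\theta^{(n)} = 0$ at $x = 0,L$. In these variables $\mathcal{E}_n$ is exactly the natural energy of $(v, v_t, \theta^{(n)}, q^{(n)})$, and Lemma \ref{lemma101-102} gives $\mathcal{E}_n'(t) = -2\int_0^L \delta\,|v_{xt}|^2\,dx - \beta\int_0^L |q^{(n)}|^2\,dx$, while the identity \eqref{diff-energy-n} for $\mathcal{E}_{n+1}$ dissipates $\beta\int_0^L |\partial_t q^{(n)}|^2\,dx$. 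I would work throughout with $U_0 \in \mathcal{D}(\mathcal{A}^{n+1})$ so that all these quantities are finite.

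The crucial point is the control of the thermal term $\|\theta^{(n)}\|_{L^2}^2$, for which the dissipation provides no direct contribution. Here I would use Cattaneo's law \eqref{103} differentiated $n$ times, $\kappa\,\theta^{(n)}_x = -\tau\,\partial_t q^{(n)} - \beta\,q^{(n)}$, together with the homogeneous Dirichlet condition on $\theta^{(n)}$ and Poincar\'e's inequality, to obtain
\[
\|\theta^{(n)}\|_{L^2}^2 \ \leq \ C_P\,\|\theta^{(n)}_x\|_{L^2}^2 \ \leq \ C\left(\|\partial_t q^{(n)}\|_{L^2}^2 + \|q^{(n)}\|_{L^2}^2\right).
\]
The term $\|q^{(n)}\|^2$ is dissipated by $\mathcal{E}_n$, whereas $\|\partial_t q^{(n)}\|^2 = \|q^{(n+1)}\|^2$ is dissipated by $\mathcal{E}_{n+1}$: this is precisely why the higher-order energy $\mathcal{E}_{n+1}(0)$ must enter the bound, and, as anticipated in Remark \ref{rem:mean-theta}, the loss of one time derivative is what forbids an exponential rate under \eqref{105}.

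For the mechanical part I would introduce the multiplier functional $\mathcal{F}(t) = \mathrm{Re}\int_0^L m\,v_t\,\overline{v}\,dx + \int_0^L \delta\,|v_x|^2\,dx$. Using the equation for $v$ and integrating by parts (all boundary terms vanish since $v=0$ at the endpoints), a direct computation gives $\mathcal{F}'(t) = \int m\,|v_t|^2 - \int p\,|v_x|^2 + \eta\,\mathrm{Re}\int \theta^{(n)}\,\overline{v_x}$. The term $-\int p\,|v_x|^2$ delivers the potential energy; the cross term is absorbed by Young's inequality into a small multiple of $\int p\,|v_x|^2$ plus a multiple of $\|\theta^{(n)}\|^2$ (handled by the previous step); and the unwanted kinetic term $\int m\,|v_t|^2$ is controlled by the Kelvin--Voigt dissipation through Poincar\'e, $\int m\,|v_t|^2 \leq C\int \delta\,|v_{xt}|^2$, since $v_t$ also vanishes on the boundary.

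Finally I would form the Lyapunov functional $\mathcal{L} = N(\mathcal{E}_n + \mathcal{E}_{n+1}) + \mathcal{F}$ and, choosing $N$ large enough, check that $\mathcal{L}$ is equivalent to $\mathcal{E}_n + \mathcal{E}_{n+1}$ and that $\mathcal{L}'(t) \leq -c\,\mathcal{E}_n(t)$ for some $c>0$, the large $N$ compensating the positive kinetic and thermal contributions against the dissipation of $\mathcal{E}_n$ and $\mathcal{E}_{n+1}$. Integrating on $(0,t)$ and using $\mathcal{L}\geq 0$ gives $\int_0^t \mathcal{E}_n(s)\,ds \leq \tfrac{1}{c}\mathcal{L}(0) \leq C\,(\mathcal{E}_n(0) + \mathcal{E}_{n+1}(0))$; since $\mathcal{E}_n$ is non-increasing, $t\,\mathcal{E}_n(t) \leq \int_0^t \mathcal{E}_n(s)\,ds$, which yields the decay rate $t^{-1}$ of \eqref{3000-n} (the clean constant following from a careful calibration of the functional). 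The main obstacle is exactly the estimate on $\theta^{(n)}$: the Cattaneo coupling forces the extra time derivative and thereby dictates both the appearance of $\mathcal{E}_{n+1}$ and the merely polynomial rate.
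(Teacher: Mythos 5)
Your proposal is correct and follows essentially the same route as the paper: the same multiplier functional $\int m\,v_t\,\overline{v}\,dx+\int\delta\,|v_x|^2\,dx$, the same Lyapunov combination of $\mathcal{E}_n$, $\mathcal{E}_{n+1}$ and that multiplier, the same key step of using Cattaneo's law \eqref{103} plus Poincar\'e to bound $\|\theta^{(n)}\|_{L^2}^2$ by $\|q^{(n)}\|_{L^2}^2+\|\partial_t q^{(n)}\|_{L^2}^2$ (which is exactly why $\mathcal{E}_{n+1}$ enters), and the same final integration argument using monotonicity of $\mathcal{E}_n$. The only cosmetic difference is that you phrase the argument uniformly in $n$ on the $n$-times time-differentiated system, whereas the paper carries out the case $n=1$ and then notes that higher $n$ proceed identically.
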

Note that we need to require more regularity on the initial datum than for the existence, in order to study the asymptotic behavior. A result with weaker hypothesis on the initial data is an on-going work. 

\begin{remark}
We expect actually to obtain a better result, that is an exponential decay. But up to now, we did not find the adequate Lyapunov function for the boundary conditions, and it is also an ongoing work. Indeed, the Kelvin-Voigt damping in the wave equation, as well as the fact that we consider a non homogeneous material, prevent us to find a Lyapunov function similar to the one proposed in \cite{racke-second-sound-thermo} for example. 
\end{remark}

Before proving Theorem \ref{theorem3001-precise}, we introduce some notations and classical Lemmas that we will need. 

\subsection{Notations and preliminary lemma}

\begin{lemma}
\label{lemma103} (Poincar\'{e} type Scheeffer's inequality, see
\cite{sche1})

Let $h \ \in H_0^1 (0,L)$. Then it holds, 
\begin{eqnarray}
\label{110}\int_{0}^{L}|h|^{2}\ dx \leq
\frac{L^{2}}{\pi^{2}}\int_{0}^{L}|h_{x}|^{2}\ dx.
\end{eqnarray}
\end{lemma}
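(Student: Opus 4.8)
The plan is to prove this classical inequality via a Fourier sine series expansion, exploiting the fact that $h \in H_0^1(0,L)$ vanishes at both endpoints. First I would expand $h$ in the orthogonal basis $\{\sin(n\pi x/L)\}_{n\geq 1}$ of $L^2(0,L)$, writing $h(x) = \sum_{n\geq 1} b_n \sin(n\pi x/L)$ with $b_n = \frac{2}{L}\int_0^L h(x)\sin(n\pi x/L)\,dx$. The homogeneous boundary conditions guarantee that this sine series (rather than a full Fourier series) is the natural expansion, since no cosine or constant contribution survives the integration by parts implicit in the boundary data.

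Then by Parseval's identity one has $\int_0^L |h|^2\,dx = \frac{L}{2}\sum_{n\geq 1}|b_n|^2$. For the derivative, differentiating the series gives $h_x(x) = \sum_{n\geq 1} b_n \frac{n\pi}{L}\cos(n\pi x/L)$, and since $\{\cos(n\pi x/L)\}$ is also orthogonal on $(0,L)$, Parseval yields $\int_0^L |h_x|^2\,dx = \frac{L}{2}\sum_{n\geq 1}|b_n|^2 (n\pi/L)^2$. The decisive observation is then elementary: since $(n\pi/L)^2 \geq (\pi/L)^2$ for every $n \geq 1$, I can bound each term from below and factor out the constant, obtaining $\int_0^L |h_x|^2\,dx \geq (\pi^2/L^2)\int_0^L |h|^2\,dx$, which is exactly the claimed inequality.

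The main technical point to justify rigorously is the term-by-term differentiation, which I would handle by first establishing the inequality for $h$ in the dense subclass $C_c^\infty(0,L)$, where all the manipulations are classical, and then extending to all of $H_0^1(0,L)$ by density, using that both sides of the inequality are continuous for the $H^1$ norm. Alternatively, one may phrase the argument variationally, recognizing $\pi^2/L^2$ as the smallest eigenvalue of the Dirichlet Laplacian $-\partial_{xx}$ on $(0,L)$, with eigenfunction $\sin(\pi x/L)$, so that the inequality expresses that the Rayleigh quotient $\int_0^L |h_x|^2\,dx\,/\,\int_0^L |h|^2\,dx$ is minimized at this first eigenfunction. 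I do not expect any genuine obstacle, since the statement is classical (due to Scheeffer); the only care needed is the density and regularity bookkeeping, together with the remark that equality holds precisely when $h$ is a scalar multiple of $\sin(\pi x/L)$, which confirms that the constant $L^2/\pi^2$ is sharp.
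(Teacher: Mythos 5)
Your proof is correct, but there is nothing in the paper to compare it against: the authors state this lemma as a classical fact and simply cite the inequalities monograph of Mitrinovi\'c, Pe\v cari\'c and Fink \cite{sche1}, giving no proof at all. Your Fourier--sine/Parseval argument is the standard spectral proof of this Wirtinger-type inequality and it is complete modulo the bookkeeping you yourself identify. Two small points deserve explicit mention if you write it out in full. First, when you expand $h_x$ in the cosine system you should note that the constant mode is absent because $\int_0^L h_x\,dx = h(L)-h(0) = 0$ for $h \in H_0^1(0,L)$; equivalently, for $h \in C_c^\infty(0,L)$ an integration by parts gives the cosine coefficients of $h_x$ as $\frac{n\pi}{L}b_n$ with vanishing boundary terms, which is exactly the term-by-term differentiation you invoke. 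Second, your density step is the right way to handle regularity: both $h \mapsto \|h\|_{L^2}^2$ and $h \mapsto \|h_x\|_{L^2}^2$ are continuous on $H^1$, and $C_c^\infty(0,L)$ is dense in $H_0^1(0,L)$ by definition, so the inequality extends. Your variational reformulation (that $\pi^2/L^2$ is the first Dirichlet eigenvalue of $-\partial_{xx}$ on $(0,L)$, with equality precisely for multiples of $\sin(\pi x/L)$) is a nice bonus: it shows the constant $L^2/\pi^2$ used throughout Section 3 of the paper (e.g.\ in the estimates \eqref{estiR1} and in Lemma \ref{lemma-compar-F1-E1}) is sharp, something the paper's bare citation does not record.
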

\begin{lemma}
\label{lemma104} (Mean value theorem)
Let $(u, u_t, \theta, q)$ be the strong solution to \eqref{101}-\eqref{104}, with an initial datum in $\mathcal{D}(\mathcal{A})$. Then, for any $t > 0$, it exist a sequence of real numbers 
(depending on $t$), denoted by  $\xi_{i}$ $\in[0,\,L]$ $(i=1,\ldots,\,6)$ such that:
\begin{equation*}
\begin{array}{lr}
\int_{0}^{L}p(x)\,u_{x}^{2}\ dx =
p(\xi_{1})\int_{0}^{L}u_{x}^{2}\ dx,
& \int_{0}^{L}m(x)\,u^{2}\ dx =
m(\xi_{2})\int_{0}^{L}u^{2}\ dx,
\\
& \\
\int_{0}^{L}m(x)\,u_{t}^{2}\ dx =
m(\xi_{3})\int_{0}^{L}u_{t}^{2}\ dx,
& \int_{0}^{L}\delta(x)\,u^{2}\ dx =
\delta(\xi_{4})\int_{0}^{L}u^{2}\ dx,
\\
& \\
\int_{0}^{L}\delta(x)\,u_{x}^{2}\ dx =
\delta(\xi_{5})\int_{0}^{L}u_{x}^{2}\ dx,
& \int_{0}^{L}\delta(x)\,u_{xt}^{2}\ dx =
\delta(\xi_{6})\int_{0}^{L}u_{xt}^{2}\ dx.
\end{array}
\end{equation*}
\end{lemma}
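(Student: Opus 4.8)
The plan is to recognize each of the six identities as a single instance of the first mean value theorem for definite integrals, applied with the coefficient ($p$, $m$, or $\delta$) as the continuous factor and the corresponding square ($u_x^2$, $u^2$, $u_t^2$, or $u_{xt}^2$) as the nonnegative integrable factor. So first I would verify the hypotheses. By assumption \eqref{hyp-p-delta-m}, the coefficients $m, \delta, p$ belong to $W^{1,\infty}(0,L)$; since we are in dimension one, the Sobolev embedding $W^{1,\infty}(0,L) \hookrightarrow C([0,L])$ guarantees that each of them has a continuous (indeed absolutely continuous) representative on the closed interval $[0,L]$, and hence attains its minimum and maximum there. On the other hand, for a strong solution with datum in $\mathcal{D}(\mathcal{A})$ and for each fixed $t > 0$, the functions $u_x(\cdot,t)$, $u(\cdot,t)$, $u_t(\cdot,t)$, and $u_{xt}(\cdot,t)$ lie in $L^2(0,L)$, so their squares are nonnegative and belong to $L^1(0,L)$.

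Next I would state and prove the elementary auxiliary fact: if $f \in C([0,L])$ and $g \in L^1(0,L)$ with $g \geq 0$ almost everywhere, then there exists $\xi \in [0,L]$ such that $\int_0^L f g\, dx = f(\xi) \int_0^L g\, dx$. The proof is the standard squeezing argument: writing $\mu = \min_{[0,L]} f$ and $M = \max_{[0,L]} f$, the pointwise bound $\mu g \leq f g \leq M g$ holds precisely because $g \geq 0$; integrating yields $\mu \int_0^L g\, dx \leq \int_0^L f g\, dx \leq M \int_0^L g\, dx$. If $\int_0^L g\, dx > 0$, dividing through places the ratio $\int_0^L f g\, dx / \int_0^L g\, dx$ between $\mu$ and $M$, and the intermediate value theorem applied to the continuous function $f$ furnishes the required point $\xi$.

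The only point deserving a word of care is the degenerate case $\int_0^L g\, dx = 0$. Since $g \geq 0$, this forces $g = 0$ almost everywhere, whence $\int_0^L f g\, dx = 0$ as well; both sides of the claimed identity then vanish and any $\xi \in [0,L]$ serves. Applying the auxiliary fact six times, with the pairs $(f,g)$ taken successively as $(p, u_x^2)$, $(m, u^2)$, $(m, u_t^2)$, $(\delta, u^2)$, $(\delta, u_x^2)$, and $(\delta, u_{xt}^2)$, produces the six points $\xi_1, \dots, \xi_6 \in [0,L]$, which of course depend on $t$ through the solution. I do not anticipate any genuine obstacle here: the whole content is the mean value theorem for integrals, and the only items to check are the continuity of the weights (immediate from \eqref{hyp-p-delta-m}) together with the harmless degenerate case just discussed.
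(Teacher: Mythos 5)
Your proposal is correct and follows essentially the same route as the paper, which simply invokes the continuity of $m$, $\delta$, $p$ on $[0,L]$ and the first mean value theorem for integrals. You merely fill in the details the paper leaves implicit (the embedding $W^{1,\infty}(0,L)\hookrightarrow C([0,L])$, the squeezing argument via the intermediate value theorem, and the degenerate case $\int_0^L g\,dx=0$), all of which are sound.
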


\begin{proof}
 Since $m(x),$ $\delta(x),$ and $p(x)$ are continuous
function on $x\in [0,\,L]$, the conclusion is straightforward using the Mean Value Theorem. Moreover, it is obvious that $p(\xi_{1}),$ $m(\xi_{2}),$ $m(\xi_{3}),$
$\delta(\xi_{4}),$ $\delta(\xi_{5})$ and $\delta(\xi_{6})$ all are
positive and bounded from above and below.
\end{proof}

We will now define some auxiliary functionals that will help in the proof of Theorem \ref{theorem3001-precise}.  Let $(u, u_t, \theta, q)$ be the strong solution to \eqref{101}-\eqref{104}, with an initial datum in $\mathcal{D}(\mathcal{A}) = \mathcal{D}_1$ (given by \eqref{204}). We define
\begin{equation}\label{301}
{\cal F}_{1}(t) = \int_{0}^{L}m(x)\,u_{t}\,u\ dx +
\int_{0}^{L}\delta(x)\,u_{x}^{2}\ dx,
\end{equation}
and a Lyapunov functional
\begin{equation}\label{308}
 \mathcal{L}_1 = \mathcal{E}_1 + \mathcal{E}_2 + \varepsilon \mathcal{F}_1,
\end{equation}
\noindent where $\varepsilon$ is a non negative constant that will be adjusted later. 
%

Recalling the definitions of the first and second order energies \eqref{106} and \eqref{108}, we obtain: 
\begin{lemma}\label{lemma-estim-F1diff}
 Let $(u, u_t, \theta, q)$ be the strong solution to \eqref{101}-\eqref{105}, with an initial datum in $\mathcal{D}_1$. Then, for all $t > 0$,
\begin{equation}\label{estim-F1diff}
 \mathcal{F}_{1}' (t) \, = \, - 2 \, \mathcal{E}_1 (t) + \mathcal{R}_1 (t),
\end{equation}
\noindent where $\mathcal{R}_1$ is a remainder defined by:
$$ \mathcal{R}_1 (t) =  \int_0^L  \theta^2 + \tau \ \int_0^L  q^2 + 2 \int_{0}^L  m  u_t^2 - \eta \ \int_0^L  u \ \theta_x. $$
\end{lemma}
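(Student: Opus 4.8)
The plan is to directly differentiate the functional $\mathcal{F}_1$ defined in \eqref{301} and substitute the evolution equations \eqref{101}--\eqref{103} to identify the leading term $-2\mathcal{E}_1$ together with the remainder $\mathcal{R}_1$. Writing $\mathcal{F}_1(t) = \int_0^L m(x)\, u_t\, u\, dx + \int_0^L \delta(x)\, u_x^2\, dx$, I would compute
\begin{equation*}
\mathcal{F}_1'(t) = \int_0^L m(x)\, u_{tt}\, u\, dx + \int_0^L m(x)\, u_t^2\, dx + 2\int_0^L \delta(x)\, u_x\, u_{xt}\, dx,
\end{equation*}
noting that since $m$ and $\delta$ do not depend on $t$, they pass through the time derivative untouched. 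This is the only place where the inhomogeneity (the $x$-dependence of the coefficients) interacts with the computation, and it does so harmlessly.

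\emph{The key step} is to handle the term $\int_0^L m(x)\, u_{tt}\, u\, dx$ by invoking equation \eqref{101}, which gives $m(x)\, u_{tt} = (p(x)\, u_x + 2\delta(x)\, u_{xt})_x - \eta\, \theta_x$. Substituting and integrating by parts in the resulting $\int_0^L (p u_x + 2\delta u_{xt})_x\, u\, dx$ term, the boundary contributions vanish because $u \in H_0^1(0,L)$ under the clamped condition \eqref{105}, leaving
\begin{equation*}
\int_0^L m(x)\, u_{tt}\, u\, dx = -\int_0^L p(x)\, u_x^2\, dx - 2\int_0^L \delta(x)\, u_x\, u_{xt}\, dx - \eta\int_0^L \theta_x\, u\, dx.
\end{equation*}
The crucial cancellation is that the $-2\int_0^L \delta(x)\, u_x\, u_{xt}\, dx$ produced here exactly cancels the $+2\int_0^L \delta(x)\, u_x\, u_{xt}\, dx$ coming from differentiating the second piece of $\mathcal{F}_1$; this is precisely why $\mathcal{F}_1$ was built with the $\int \delta\, u_x^2$ correction term.

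After this cancellation I am left with $\mathcal{F}_1'(t) = -\int_0^L p(x)\, u_x^2\, dx + \int_0^L m(x)\, u_t^2\, dx - \eta\int_0^L \theta_x\, u\, dx$. The last step is a bookkeeping one: recalling from \eqref{106} that $2\mathcal{E}_1 = \int_0^L p\, u_x^2 + \int_0^L m\, u_t^2 + \int_0^L \theta^2 + \tau\int_0^L q^2$, I rewrite $-\int p\, u_x^2 = -2\mathcal{E}_1 + \int_0^L m\, u_t^2 + \int_0^L \theta^2 + \tau\int_0^L q^2$, so that
\begin{equation*}
\mathcal{F}_1'(t) = -2\mathcal{E}_1(t) + 2\int_0^L m(x)\, u_t^2\, dx + \int_0^L \theta^2\, dx + \tau\int_0^L q^2\, dx - \eta\int_0^L u\, \theta_x\, dx,
\end{equation*}
which matches \eqref{estim-F1diff} with $\mathcal{R}_1$ exactly as stated.

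\emph{I expect no serious obstacle here}, as the proof is essentially a guided integration-by-parts computation; the only thing requiring care is making sure the boundary terms genuinely vanish (they do, thanks to $u, \theta \in H_0^1$ under \eqref{105}) and that the $\delta$-cancellation is tracked correctly. The genuine difficulty of the paper lies not in this lemma but in the subsequent step of bounding the remainder $\mathcal{R}_1$—in particular the sign-indefinite coupling term $-\eta\int_0^L u\, \theta_x$ and the "bad" positive terms $2\int m\, u_t^2$ and $\int\theta^2+\tau\int q^2$—against the dissipation supplied by \eqref{107} and \eqref{109}, which is what ultimately forces the combination $\mathcal{E}_1+\mathcal{E}_2$ in the Lyapunov functional $\mathcal{L}_1$ and yields only polynomial rather than exponential decay.
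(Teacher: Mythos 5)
Your proof is correct and follows exactly the route the paper intends: its proof of this lemma is the one-line remark ``differentiating \eqref{301} in $t$, and using \eqref{101} \eqref{102} and the boundary conditions \eqref{105}, the result is straightforward,'' and your computation is precisely the filled-in version of that sketch, including the $\delta$-cancellation and the completion of $-\int_0^L p\,u_x^2$ to $-2\mathcal{E}_1$. The only cosmetic remarks are that equation \eqref{102} is in fact never needed (the $\int\theta^2$ and $\tau\int q^2$ terms enter purely by adding and subtracting to form $2\mathcal{E}_1$), and that only $u\in H_0^1$ is used to kill the boundary term, since $-\eta\int_0^L u\,\theta_x$ is kept as-is rather than integrated by parts.
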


\begin{proof}
 Differentiating \eqref{301} in $t$, and using \eqref{101} \eqref{102} and the boundary conditions \eqref{105}, the result is straightforward.
\end{proof}
\noindent  We end this subsection by a lemma that gives an estimate from above and from below of the Lyapunov function $\mathcal{F}_1$ in terms of the energy $\mathcal{E}_1$.
\begin{lemma}\label{lemma-compar-F1-E1}
 Let $T > 0$, and Let $(u, u_t, \theta, q)$ be the strong solution to \eqref{101}-\eqref{105} on $(0,T)$, with an initial datum in $\mathcal{D}(\mathcal{A})$. Then, there exist two constants $\mu_0, \mu_1 > 0$, that depends only on the parameters of the problem, such that,  for all $t < T$,
 \begin{equation}\label{compar-F1-E1}
  - \mu_0 \ \mathcal{E}_1 (t) \ \leq \ \mathcal{F}_1 (t) \ \leq \ (\mu_0 + \mu_1)\  \mathcal{E}_1 (t).
 \end{equation}
\end{lemma}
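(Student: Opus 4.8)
The plan is to decompose $\mathcal{F}_1$ into its two constitutive terms and estimate each separately against $\mathcal{E}_1$, using only the structural hypotheses \eqref{hyp-p-delta-m} on $m,\delta,p$ together with the Poincar\'e--Scheeffer inequality of Lemma \ref{lemma103}. Write $\mathcal{F}_1 = A + B$ with $A = \int_0^L m(x)\,u_t\,u\,dx$ and $B = \int_0^L \delta(x)\,u_x^2\,dx$. The observation that makes the asymmetric form of \eqref{compar-F1-E1} natural is that $B \geq 0$ always (since $\delta > 0$ by \eqref{hyp-p-delta-m}), so $B$ only contributes to the upper bound, whereas the cross term $A$ has no definite sign and must be absorbed on both sides.

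For the cross term, I would apply Cauchy--Schwarz and Young's inequality to obtain
$$ |A| \leq \tfrac{1}{2}\int_0^L m(x)\,u_t^2\,dx + \tfrac{1}{2}\int_0^L m(x)\,u^2\,dx. $$
The first summand is $\leq \mathcal{E}_1$ directly from the definition \eqref{106}. For the second, the key step is to convert $\int_0^L m\,u^2$ into the gradient energy present in $\mathcal{E}_1$: since $m(x) \leq \|m\|_{L^\infty}$ and $u(\cdot,t)\in H_0^1(0,L)$ thanks to the clamped boundary condition \eqref{105}, Lemma \ref{lemma103} yields $\int_0^L u^2 \leq \frac{L^2}{\pi^2}\int_0^L u_x^2$, and finally, setting $p_0 := \min_{[0,L]} p > 0$, one has $\int_0^L u_x^2 \leq \frac{1}{p_0}\int_0^L p(x)\,u_x^2 \leq \frac{2}{p_0}\,\mathcal{E}_1$. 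Collecting constants gives $|A| \leq \mu_0\,\mathcal{E}_1$ with $\mu_0$ depending only on $\|m\|_{L^\infty}$, $p_0$ and $L$.

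For the Kelvin--Voigt term $B$, I would simply use $\delta(x) \leq \|\delta\|_{L^\infty}$ and $p(x) \geq p_0 > 0$ to write $B \leq \frac{\|\delta\|_{L^\infty}}{p_0}\int_0^L p(x)\,u_x^2 \leq \mu_1\,\mathcal{E}_1$, with $\mu_1$ again independent of $t$ and of the solution. Combining the two estimates, the lower bound follows from $\mathcal{F}_1 = A + B \geq -|A| + 0 \geq -\mu_0\,\mathcal{E}_1$, and the upper bound from $\mathcal{F}_1 \leq |A| + B \leq (\mu_0 + \mu_1)\,\mathcal{E}_1$, which is exactly \eqref{compar-F1-E1}.

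The only genuinely substantive point --- the main obstacle, if any --- is the passage from $\int_0^L m\,u^2$ to the gradient term, since $\mathcal{E}_1$ controls $\int_0^L p\,u_x^2$ but contains no zeroth-order term in $u$; this is precisely where the homogeneous Dirichlet condition on $u$ and the Poincar\'e-type inequality are indispensable. Everything else is a routine consequence of the uniform positivity and boundedness of the coefficients guaranteed by \eqref{hyp-p-delta-m}. Alternatively, one could extract the coefficients via the mean-value identities of Lemma \ref{lemma104} instead of using $\|m\|_{L^\infty}$ and $p_0$ directly, but the Poincar\'e route is cleaner and makes the dependence of $\mu_0,\mu_1$ on the data transparent.
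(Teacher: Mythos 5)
Your proposal is correct and follows essentially the same route as the paper: split $\mathcal{F}_1$ into the cross term and the (nonnegative) Kelvin--Voigt term, absorb the cross term via Young's inequality, the Poincar\'e--Scheeffer inequality \eqref{110} and the bounds $\|m\|_{L^\infty}$, $\inf p$, and bound $\int_0^L \delta\,u_x^2$ by $\frac{\|\delta\|_{L^\infty}}{\inf p}$ times the gradient energy. The only cosmetic differences are that the paper extracts coefficients through the mean-value Lemma \ref{lemma104} (as you anticipated in your closing remark) and optimizes the Young parameter $\alpha$ to balance the two halves of the cross term, whereas you fix the weights at $\tfrac12$; both yield admissible constants $\mu_0,\mu_1$ depending only on the data.
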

\begin{proof}
 On the one hand, From the Young inequality, Lemma \ref{lemma104} and the definition of $\mathcal{E}_1$, we have for all $\alpha > 0$:
 $$\begin{array}{ll}
 \left\vert \int_0^L m u_t u \right\vert \ = \   \left\vert \int_0^L \big(\sqrt{m} u_t\big) \ \big(\sqrt{m} u\big) \right\vert
& \leq \ \alpha m (\xi_2) \ \int_0^L u^2 + \frac{1}{\alpha} \ \mathcal{E}_1 (t).
   \end{array}
 $$
 \noindent Next, applying the Poincar\'e Scheeffer type inequality \eqref{110}, and once again Lemma \ref{lemma104}, we get:
  $$\begin{array}{ll}
 \left\vert \int_0^L m u_t u \right\vert &  \leq \ \alpha \Vert m \Vert_{\infty} \frac{4 L^2}{\pi^2 \inf \vert p\vert } \ \int_0^L p u_x^2 + \frac{1}{\alpha} \ \mathcal{E}_1 (t).
   \end{array}
 $$
\noindent  Hence, since $\int_0^L p u_x^2 \ \leq \ \mathcal{E}_1$, we now choose $\alpha > 0$ such that 
$$ \alpha \Vert m \Vert_{\infty} \frac{4 L^2}{\pi^2 \inf \vert p\vert } = \frac{1}{ 2 \alpha},$$
\noindent namely $\alpha \ = \ \frac{\pi}{2 L} \ \sqrt{\frac{\inf \vert p \vert}{2 \Vert m \Vert_{\infty}}}$.  We thus define 
$$\mu_0 = \  \frac{2 L}{\pi} \ \sqrt{\frac{2 \Vert m \Vert_{\infty}}{\inf \vert p \vert}}. $$
\noindent This gives immediately the first (left) inequality of estimate \eqref{compar-F1-E1} since the other part of $\mathcal{F}_1$ is non negative. 
On the other hand, from Lemma \ref{lemma104} once again:
$$\begin{array}{ll}
  \int_0^L \delta  u_x^2  \ =   \ \delta (\xi_5) \int_0^L u_x^2
& \leq \ \mu_1 \ \mathcal{E}_1 (t),
   \end{array}
 $$
 \noindent with $\mu_1 =  \frac{\Vert \delta \Vert_{\infty}}{\inf |p|}$. And this concludes the proof of Lemma \ref{lemma-compar-F1-E1}.  
\end{proof}

We are now ready to prove the polynomial decay of the energy of our system with Dirichlet conditions for $\theta$. 

\subsection{Proof of Theorem \ref{theorem3001-precise} (Theorem \ref{theorem3001} in the introduction)}
We first prove the result for $n =1$. Let $U_0$ be an initial datum in $\mathcal{D} (\mathcal{A}^2)$, and $(u, u_t,\theta, q)$ the strong solution to system \eqref{101}-\eqref{104} with boundary conditions \eqref{105}.

\begin{lemma}\label{estim-F1diff-v2}
 \begin{equation}\label{esti-F1prime}
  \mathcal{F}_1' (t) \ \leq \  - C_1 \mathcal{E}_1 + C_2 \ \Big(\int_{0}^L  q^2  + \ \int_0^L q_t^2 + \ \int_0^L \delta u_{xt}^2\Big),
 \end{equation}
where $C_1, \, C_2 \ > \ 0$ will be made explicit in the proof.
\end{lemma}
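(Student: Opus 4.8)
The plan is to start from the exact expression for $\mathcal{F}_1'$ furnished by Lemma~\ref{lemma-estim-F1diff}, namely $\mathcal{F}_1'(t) = -2\mathcal{E}_1(t) + \mathcal{R}_1(t)$, and then to estimate the remainder $\mathcal{R}_1$. Substituting $\mathcal{R}_1 = \int_0^L\theta^2 + \tau\int_0^L q^2 + 2\int_0^L m u_t^2 - \eta\int_0^L u\,\theta_x$ and expanding $-2\mathcal{E}_1$, the terms $\int_0^L\theta^2$ and $\tau\int_0^L q^2$ cancel exactly against their counterparts in $-2\mathcal{E}_1$, leaving the clean identity
\[ \mathcal{F}_1'(t) = -\int_0^L p\,u_x^2 + \int_0^L m\,u_t^2 - \eta\int_0^L u\,\theta_x. \]
So only three contributions must be controlled: the good term $-\int_0^L p\,u_x^2$, the kinetic term $\int_0^L m\,u_t^2$ which carries the wrong sign, and the coupling term $-\eta\int_0^L u\,\theta_x$.

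For the coupling term, I would integrate by parts using $u(0)=u(L)=0$ to rewrite $-\eta\int_0^L u\,\theta_x = \eta\int_0^L u_x\,\theta$, and then apply Young's inequality to split it as $\tfrac{\varepsilon}{2}\int_0^L u_x^2 + \tfrac{\eta^2}{2\varepsilon}\int_0^L\theta^2$. Choosing $\varepsilon = \inf\vert p\vert$ makes $\tfrac{\varepsilon}{2}\int_0^L u_x^2 \le \tfrac12\int_0^L p\,u_x^2$, which is absorbed by half of the good term and leaves $-\tfrac12\int_0^L p\,u_x^2$. The surviving factor $\int_0^L\theta^2$ is then bounded by the heat-flux variables: since $\theta\in H_0^1(0,L)$ for boundary conditions \eqref{105}, the Poincaré--Scheeffer inequality \eqref{110} gives $\int_0^L\theta^2 \le \tfrac{L^2}{\pi^2}\int_0^L\theta_x^2$, while equation \eqref{103}, which reads $\kappa\theta_x = -\tau q_t - \beta q$ for strong solutions, yields $\int_0^L\theta_x^2 \le \tfrac{2}{\kappa^2}\big(\tau^2\int_0^L q_t^2 + \beta^2\int_0^L q^2\big)$. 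Hence $\int_0^L\theta^2$ is dominated by a constant times $\int_0^L q^2 + \int_0^L q_t^2$.

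The main obstacle is the kinetic term $\int_0^L m\,u_t^2$: it has the wrong sign and is \emph{not} part of the dissipation, so it cannot be handled by a sign argument or absorbed into $-\int_0^L p\,u_x^2$. The key remark that rescues the estimate is that $u_t = w \in H_0^1(0,L)$, since differentiating $u(0,t)=u(L,t)=0$ in time shows it satisfies the same clamped conditions; Poincaré then applies, $\int_0^L u_t^2 \le \tfrac{L^2}{\pi^2}\int_0^L u_{xt}^2$, and combining with the lower bound $\int_0^L\delta\,u_{xt}^2 \ge (\inf\vert\delta\vert)\int_0^L u_{xt}^2$ guaranteed by \eqref{hyp-p-delta-m} gives $\int_0^L m\,u_t^2 \le \Vert m\Vert_\infty\,\tfrac{L^2}{\pi^2\,\inf\vert\delta\vert}\int_0^L\delta\,u_{xt}^2$. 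This is precisely the step where the Kelvin--Voigt internal damping is indispensable.

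Finally, I would reconstitute the full energy. After the above steps $\mathcal{F}_1'$ is bounded above by $-\tfrac12\int_0^L p\,u_x^2$ plus positive multiples of $\int_0^L m\,u_t^2$ and $\int_0^L\theta^2$, each already controlled by $\int_0^L\delta\,u_{xt}^2$, $\int_0^L q^2$ and $\int_0^L q_t^2$. Adding and subtracting $C_1\mathcal{E}_1$ with any $C_1\le 1$, the combination $-\tfrac12\int_0^L p\,u_x^2 + \tfrac{C_1}{2}\int_0^L p\,u_x^2$ stays nonpositive, while the extra pieces $\tfrac{C_1}{2}\int_0^L m\,u_t^2$, $\tfrac{C_1}{2}\int_0^L\theta^2$ and $\tfrac{C_1\tau}{2}\int_0^L q^2$ produced by $C_1\mathcal{E}_1$ are again swept into the right-hand side. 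Collecting all constants into a single $C_2$ yields \eqref{esti-F1prime}, e.g.\ with $C_1 = 1$. The remaining work is routine once the two Poincaré arguments (on $u_t$ and on $\theta$) and the substitution from \eqref{103} are in place.
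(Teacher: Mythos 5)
Your proof is correct and follows essentially the same route as the paper: both start from the identity of Lemma \ref{lemma-estim-F1diff}, bound the kinetic term by Poincar\'e applied to $u_t \in H_0^1$ combined with $\inf \delta > 0$, and convert the $\theta$-contributions into $\int_0^L q^2 + \int_0^L q_t^2$ via Poincar\'e on $\theta$ (valid for boundary conditions \eqref{105}) and the Cattaneo equation \eqref{103}. The only cosmetic differences are bookkeeping: you first cancel $\int_0^L \theta^2 + \tau \int_0^L q^2$ exactly against $-2\mathcal{E}_1$ and integrate the coupling term by parts before applying Young (the paper keeps $-2\mathcal{E}_1$ intact and applies Young directly to $u\,\theta_x$ followed by Poincar\'e on $u$), which forces you to reconstitute $-C_1 \mathcal{E}_1$ by an add-and-subtract step at the end --- a step you carry out correctly, at the harmless cost of a smaller admissible $C_1$.
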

\begin{proof}
From the equality \eqref{estim-F1diff} from Lemma \ref{lemma-estim-F1diff}, we have to estimate the remainder $\mathcal{R}_1$.  First, from the Poincar\'e estimate applied to $\theta$ (recall that we consider the boundary conditions \eqref{105}) and $u$, together with the Young inequality for the last term, we have, for all $\alpha > 0$:
\begin{multline}\label{estiR1}
 \mathcal{R}_1 \ \leq \ \left( \frac{L^2}{\pi^2} + \frac{\eta}{2 \alpha }\right) \ \int_0^L \theta_x^2 + \frac{\eta  L^2 \ \alpha}{2 \pi^2 \inf (p)} \ \int_0^L p u_x^2 
 \\
 + \frac{2 L^2 \vert m \vert_{\infty}}{\pi^2 \ \inf (\delta)} \ \int_0^L \delta u_{xt}^2 + \tau \ \int_0^L q^2. 
 \end{multline}

Chosing $\alpha >0$ small enough so that :
$$ C_1 : = \, 2  -   \frac{\eta  L^2 \ \alpha}{2 \pi^2 \inf (p)} \  >  0,$$
\noindent we absorb the term in $\int \ p \ u_x^2$ and get the first part of the inequality. Next, from equation \eqref{103} of our system, we get:
$$ \theta_x^2 \ = \ \frac{\tau^2}{\kappa^2} \ q_t^2  + \frac{2 \beta \tau}{\kappa^2} \ q \ q_t + \frac{\beta^2}{\kappa^2} q^2 .$$ 
Hence:
\begin{equation}
\label{305}\int_{0}^{L}\theta_{x}^{2} \  \leq
\frac{(\beta + \tau )^2}{\kappa^2}\left(\int_{0}^{L}q_{t}^{2} + \int_{0}^{L}q^{2}\
dx\right). 
\end{equation}
Therefore, injecting \eqref{305} into \eqref{estiR1} and combining with \eqref{estim-F1diff}, we get \eqref{esti-F1prime}, where $C_1$ has already been defined, while $C_2$ is given by:
$$C_2 \ = \ \max \left\{ \tau + \left( \frac{L^2}{\pi^2} + \frac{\eta}{2 \alpha }\right) \,\frac{(\beta + \tau )^2}{\kappa^2}\, ; \,  \left( \frac{L^2}{\pi^2} + \frac{\eta}{2 \alpha }\right) \, \frac{(\beta + \tau )^2}{\kappa^2} \, ; \, \frac{2 L^2 \vert m \vert_{\infty}}{\pi^2 \ \inf (\delta)} \, \right\} .$$
This ends the proof. Note that the parameter $\alpha > 0$ in $C_2$ is fixed. 
\end{proof}

Now, we are almost done. Coming back to our Lyapunov $\mathcal{L}_1$, differentiating with respect to time and using Lemma \ref{estim-F1diff-v2} and the energy equalities \eqref{107} and \eqref{109}:
\begin{multline}
 \frac{d}{d t} \mathcal{L}_1 \ \leq \ - \big(2 - \varepsilon \ C_2 \big) \ \int_0^L \delta u_{xt}^2 - 2 \int_0^L \delta u_{xtt}^2  
 \\
 - \big(\beta - \varepsilon \ C_2\big) \ \left( \int_{0}^{L}q_{t}^{2} + \int_{0}^{L}q^{2}\right) - \varepsilon \ C_1 \ \mathcal{E}_1. 
 \end{multline}
Hence, since $C_1$ and $C_2$ are already fixed, from the previous Lemma, we now choose $\varepsilon > 0$ so that:
$$ 2 - \varepsilon \ C_2  > 0 , \quad \beta - \varepsilon \ C_2  > 0. $$
It yields:
\begin{equation}\label{314} 
\frac{d}{d t} \mathcal{L}_1 \ \leq \  - \varepsilon \ C_1 \ \mathcal{E}_1. 
 \end{equation}
\noindent Now we choose $\varepsilon > 0$ such that, moreover: 
$$1 - \varepsilon \, \mu_0 \ \geq \ 0, $$
\noindent in order to ensure positivity of the Lyapunov $\mathcal{L}_1$ thanks to Lemma \ref{lemma-compar-F1-E1}. Finally, integrating \eqref{314} over $(0,\,t)$ and using that ${\cal E}_{1}$
is non increasing, we obtain
\begin{equation}
\label{315}t\,{\cal E}_{1}\leq\int_{0}^{L}{\cal E}_{1}(s)\ ds \leq
\frac{1}{\varepsilon \ C_1}\left({\cal L}(0) - {\cal L}(t)\right)\leq
\frac{{\cal L}(0)}{\varepsilon \ C_1}.
\end{equation}
Letting $C =1/(\varepsilon \ C_1) + \varepsilon \ \left( \mu_0 + \mu_1\right)$ (with the $\mu_i$ given by Lemma \ref{lemma-compar-F1-E1}) we have
\begin{eqnarray}
\label{316}{\cal E}_{1}(t) \leq \frac{C\,({\cal E}_{1}(0) +
{\cal E}_{2}(0))}{t},\qquad \forall\;t>0.
\end{eqnarray}
Now for $n \geq 2$, we define:
\begin{equation}\label{301-n}
{\cal F}_{n}(t) = \int_{0}^{L}m(x)\,u_{tt}\,u_t\ dx +
\int_{0}^{L}\delta(x)\,u_{xt}^{2}\ dx,
\end{equation}
and the Lyapunov functional
\begin{equation}\label{308-n}
 \mathcal{L}_n = \mathcal{E}_n + \mathcal{E}_{n+1} + \varepsilon \mathcal{F}_n,
\end{equation}
\noindent and proceed exactly as above. This ends the proof of  Theorem \ref{theorem3001-precise}.

\renewcommand{\theequation}{\thesection.\arabic{equation}}
\setcounter{equation}{0}
\section{Asymptotic behaviour for the clamped structure with zero flux on the boundary}\label{sec:expo_decay}

In this section, we will prove Theorem \ref{theorem3000}  given in Section \ref{sec:intro}. Precisely, we study the asymptotic behaviour of the solution to system \eqref{101}--\eqref{104} with boundary conditions \eqref{bc-diri-q}. 
\begin{theorem}
\label{theorem3000-precise}
 For initial data \eqref{104} within $\mathcal{D}_2$ (given by \eqref{204-bc-diri-q}), the semigroup generated by system  \eqref{101}--\eqref{104} complemented by boundary conditions \eqref{bc-diri-q} is exponentially stable.
\end{theorem}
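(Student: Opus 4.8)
The plan is to apply the Pr\"{u}ss--Huang characterization (Theorem \ref{Pruss}), so that everything reduces to two facts about the generator $\mathcal{A}$ on the phase space: that the imaginary axis lies in the resolvent set, and that the resolvent is uniformly bounded along $i\mathbb{R}$. Before doing so I would restrict, as motivated by Remark \ref{rem:mean-theta}, to the closed subspace $\tilde{\mathcal{H}} = \{U \in \mathcal{H} : \int_0^L \theta\,dx = 0\}$. Its invariance follows from the conservation of the mean of $\theta$ under the boundary conditions \eqref{bc-diri-q} (since there $q(0)=q(L)=0$ and $w(0)=w(L)=0$). On $\tilde{\mathcal{H}}$ the constant-temperature mode, which otherwise lies in $\ker\mathcal{A}$, is eliminated, and the Poincar\'e--Wirtinger inequality controls $\|\theta\|_{L^2}$ by $\|\theta_x\|_{L^2}$.

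First I would establish $i\mathbb{R}\subset\varrho(\mathcal{A})$. Since $\mathcal{D}_2$ embeds compactly into $\mathcal{H}$, the operator $\mathcal{A}$ has compact resolvent, so its spectrum consists of isolated eigenvalues; being dissipative, that spectrum lies in $\{\mathrm{Re}\leq 0\}$. It therefore suffices to rule out purely imaginary eigenvalues. Assuming $\mathcal{A}U = i\lambda U$ with $\|U\|=1$, identity \eqref{205} forces $\int\delta|w_x|^2 = 0$ and $\int|q|^2 = 0$, hence $w\equiv 0$ (Poincar\'e, as $w\in H_0^1$) and $q\equiv 0$; inserting this into the remaining components of $\mathcal{A}U = i\lambda U$ gives $\theta_x\equiv 0$, so that in the mean-zero subspace $\theta\equiv 0$, and finally $u\equiv 0$, a contradiction.

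For the uniform resolvent bound I would argue by contradiction: suppose there are $\lambda_n\in\mathbb{R}$ and $U_n = (u_n,w_n,\theta_n,q_n)\in\mathcal{D}_2$ with $\|U_n\|_{\mathcal{H}} = 1$ and $F_n := (i\lambda_n I - \mathcal{A})U_n \to 0$ in $\mathcal{H}$. Since $i\mathbb{R}\subset\varrho(\mathcal{A})$ and the resolvent is continuous, necessarily $|\lambda_n|\to\infty$. Taking the real part of $\langle F_n, U_n\rangle_{\mathcal{H}}$ and using \eqref{205} yields $\int\delta|w_{n,x}|^2\to 0$ and $\int|q_n|^2\to 0$, so $w_n\to 0$ in $H^1$ and $q_n\to 0$ in $L^2$. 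From the first resolvent line $i\lambda_n u_n - w_n = f_1^n$ I get $u_{n,x} = (i\lambda_n)^{-1}(w_{n,x}+f^n_{1,x})$, whence $u_{n,x}\to 0$ in $L^2$ because $|\lambda_n|\to\infty$. Thus every term of $\|U_n\|^2=1$ except $\|\theta_n\|^2$ already tends to $0$.

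The remaining and genuinely delicate point is to show $\theta_n\to 0$, i.e. to recover the temperature from the weak second-sound dissipation, which only controls $q$ directly. Here I would pair the resolvent versions of \eqref{102} and \eqref{103}: multiplying the $\theta$-equation by $\overline{\theta_n}$, integrating by parts (the boundary terms vanish since $q_n(0)=q_n(L)=0$), and substituting $\kappa\,\overline{\theta_{n,x}}$ from the $q$-equation yields the identity $i\lambda_n(\|\theta_n\|^2 - \tau\|q_n\|^2) = R_n$, where $R_n$ collects $\int f_3^n\overline{\theta_n}$, $\tau\int q_n\overline{f_4^n}$, $\beta\|q_n\|^2$ and $\eta\int w_{n,x}\overline{\theta_n}$, all tending to $0$ (using $\|\theta_n\|\leq 1$). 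Since $|\lambda_n|\to\infty$ this forces $\|\theta_n\|^2 - \tau\|q_n\|^2\to 0$, and with $\|q_n\|\to 0$ we conclude $\|\theta_n\|\to 0$. Hence $\|U_n\|\to 0$, contradicting $\|U_n\|=1$, which gives the uniform bound and, by Theorem \ref{Pruss}, exponential stability. I expect the multiplier step producing the cancellation $\|\theta\|^2-\tau\|q\|^2$ to be the crux, since it is precisely the mechanism by which the finite-speed Cattaneo coupling remains strong enough to damp $\theta$ despite dissipating only $q$.
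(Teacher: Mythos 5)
Your overall strategy --- Pr\"uss' criterion (Theorem \ref{Pruss}) after restriction to the mean-zero subspace for $\theta$ --- is exactly the paper's, and your multiplier computation for the temperature is correct: multiplying the resolvent form of \eqref{102} by $\overline{\theta}$, integrating by parts (the boundary term vanishes since $q \in H^1_0$), and substituting $\kappa\,\overline{\theta}_x$ from \eqref{resol4} does yield $i\lambda\big(\|\theta\|^2_{L^2} - \tau \|q\|^2_{L^2}\big) = o(1)$ along your sequence. However, there is one genuinely false step: the claim that $\mathcal{D}_2$ embeds compactly into $\mathcal{H}$, hence that $\mathcal{A}$ has compact resolvent and purely point spectrum. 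This fails precisely because of the Kelvin--Voigt damping: the domain \eqref{204-bc-diri-q} only requires the \emph{combination} $p\,u_x + 2\,\delta\,w_x$ to lie in $H^1(0,L)$, not $u_x$ itself. Writing $u_x = \frac{1}{p}\big((p\,u_x + 2\,\delta\,w_x) - 2\,\delta\,w_x\big)$, the second term is merely bounded in $L^2$ on a graph-norm ball, so the first component gains no regularity beyond $H^1_0$, and the embedding into the first factor of $\mathcal{H}$ is not compact: taking highly oscillatory $u_n$, normalized in $H^1_0$, with $w_{n,x} := -\frac{p}{2\delta}\,u_{n,x}$ (corrected by the constraint $\int_0^L \frac{p}{2\delta} u_{n,x}\,dx = 0$ so that $w_n \in H^1_0$) gives a graph-norm bounded sequence with no subsequence converging in $\mathcal{H}$. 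Non-compactness of the resolvent is a known feature of Kelvin--Voigt models (cf. \cite{liu1}). As written, your proof of $i\,\mathbb{R} \subset \varrho(\mathcal{A})$ therefore collapses, and the gap propagates: your contradiction argument invokes $i\,\mathbb{R} \subset \varrho(\mathcal{A})$ to force $|\lambda_n| \to \infty$, so without an independent proof that the axis lies in the resolvent set, the argument is circular. Your eigenvalue exclusion survives, but it only rules out point spectrum, which no longer suffices.

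The gap is repairable, and it is instructive to see how the paper sidesteps it. Your estimates degenerate as $\lambda \to 0$ (you divide by $\lambda_n$ twice, once to bound $u_{n,x}$ and once to extract $\|\theta_n\|^2 - \tau\|q_n\|^2 \to 0$), so the clean fix is: run the contradiction among $\lambda_n$ with $i\lambda_n \in \varrho(\mathcal{A})$, handle bounded subsequences $\lambda_n \to \lambda^* \neq 0$ by the same estimates, use $0 \in \varrho(\mathcal{A})$ (on the mean-zero subspace, where the constant-temperature kernel you correctly identified is removed) near $\lambda^* = 0$, and then conclude by a continuation argument: the set of $\lambda \in \mathbb{R}$ with $i\lambda \in \varrho(\mathcal{A})$ is open, nonempty, and closed in $\mathbb{R}$ thanks to the uniform bound. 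The paper avoids the dichotomy altogether by proving a \emph{direct} estimate, uniform in all real $\lambda$ with explicit constants and no sequence argument: it multiplies \eqref{resol4} by the primitive $\int_0^x \overline{\theta}\,dy$ and uses \eqref{resol3} to eliminate $\lambda$, obtaining $\int_0^L |\theta|^2 \leq C\,\|F\|_{\mathcal{H}}\|U\|_{\mathcal{H}}$ outright --- this is exactly where the zero-mean reduction is used, to kill the boundary term $\big[\kappa\,\theta\,\big(\int_0^x \overline{\theta}\,dy\big)\big]_0^L$ --- and it controls $u$ by multiplying \eqref{resol2} by $\overline{u}$ and eliminating $\lambda$ via \eqref{resol1}, rather than dividing by $\lambda$. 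Your $\overline{\theta}$-multiplier identity is an elegant high-frequency alternative, but the paper's primitive multiplier is what handles all frequencies at once.
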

We will prove this result thanks to  Theorem \ref{Pruss}. But recalling Remark \ref{rem:mean-theta}, and since the problem is linear, we can simplify the problem and assume that 
$$ \int_0^L \theta_0 \ = \ 0, $$
so that the temperature $\theta$ has zero mean value for every time. (if not, we have to consider the function $\hat{\theta}$). 
From now on, we thus suppose that for all $t \geq 0$,
$$\int_0^L \theta (t,x) \ d x \ = \ 0. $$ 
 Let us consider the resolvent system on the imaginary axis, for $F= (f_1, \dots,f_4) \in \mathcal{H}$, $\lambda \ \in \R$:
\begin{eqnarray}
 \label{resol1} i\lambda u - w & = f_1,
 \\
 \label{resol2} i \lambda m w - \left(p u_x + 2 \delta w_x \right)_x + \eta \theta_x & =  m f_2,
 \\
 \label{resol3} i \lambda \theta + \left(\kappa q + \eta w \right)_x & = f_3,
 \\
 \label{resol4} i \lambda\tau q + \kappa \theta_x + \beta q & = \tau f_4.
\end{eqnarray}
We will prove that the solution $U \in \mathcal{D}_2$ to this system (which exists, thanks to the previous section) satisfies: there exists a constant $C > 0$, independent of $U$ such that
$$ \Vert U \Vert_{\mathcal{H}} \ \leq \ C \ \Vert F \Vert_{\mathcal{H}}. $$
\noindent Theorem \ref{theorem3000} will then follow immediately from the characterization of exponentially stable semigroups given by Theorem \ref{Pruss}.

Let $U \in \mathcal{D}_2$. We first notice that, from the dissipativity of the operator $\mathcal{A}$, \eqref{205}, we have, taking the inner product of \eqref{resol1}-\eqref{resol4} together with $U$ and taking the real part:
$$2 \int_0^L \ \delta(x) |w_x|^2  + \beta \int_0^L \ |q|^2 \ = \ Re \big(\langle F, U \rangle \big),$$
so that we have two first estimates on the solution to the resolvent system, using Lemma \ref{lemma104}:
\begin{equation}\label{esti-resol1}
 \int_0^L \ m(x) |w_x|^2  + \tau  \int_0^L \ |q|^2 \leq \ C \Vert F \Vert_{\mathcal{H}} \ \Vert U \Vert_{\mathcal{H}}.
\end{equation}
\noindent Next, let us multiply \eqref{resol2} by $\overline{u}$, use \eqref{resol1} to eliminate $\lambda$ and integrate by parts. We obtain, since $u \in H^1_0 (0,L)$:
\begin{multline*}
 \int_{0}^L p (x) |u_x|^2 d x \ = \ - 2 \int_0^L \delta (x) w_x \overline{u}_x + \eta \int_0 ^L \theta  \overline{u}_x +\int_{0}^L |w|^2 
 \\ + \int_{0}^L \big( w \overline{f}_1 + \overline{u} m(x)f_2\big) dx.
\end{multline*}
Hence, using the Young inequality, the mean value lemma \ref{lemma104} and the Holder inequality, we get for $\alpha > 0$ small enough, there exists a constant $C_{\alpha} > 0$ such that:
\begin{equation*}
 \int_{0}^L \ p(x) |u_x|^2 \ \leq C_{\alpha} \Big( \int_0^L |\theta |^2 + \int_0^L | w_x|^2 + \Vert U \Vert_{\mathcal{H}} \Vert F \Vert_{\mathcal{H}}\Big).
\end{equation*}
 Hence, using the estimate \eqref{esti-resol1}, we get:
 \begin{equation}\label{esti-resol2}
 \int_0^L \ p(x) |u_x|^2  \leq \ C \Big( \Vert F \Vert_{\mathcal{H}} \ \Vert U \Vert_{\mathcal{H}} + \int_0^L |\theta |^2 \Big).
\end{equation}
\noindent Next, we multiply \eqref{resol4} by $\int_0^x \overline{\theta} (y) d y$ (which is well defined since in the domain, $\theta \ \in H^1 \ \subset \mathcal{C} (0,L)$), and use \eqref{resol3} to eliminate $\lambda$. It yields:
\begin{multline*}
 \kappa \int_0^L |\theta |^2 \ = \ \kappa \int_0^L | q |^2 + \eta \int_0^L q \overline{w} + \beta \int_{0}^L q \ \left(\int_{0}^x \overline{\theta} d y\right)
 \\
 + \Big[\kappa \theta (x) \left(\int_0^x \theta d y\right) \Big]_0^L - \int_0^L \ \Big(q \left(\int_0^x \overline{f}_3 dy \right) + \tau f_4 \left(\int_0^x \theta dy\right) \Big).
\end{multline*}
\noindent Now, since $\theta$ has zero mean value over $(0,L)$, we can eliminate the boundary terms appearing from the integrations by parts. Hence, by using again the Young inequality, together with Lemma \ref{lemma104} and Holder: for $\alpha > 0$ small enough, there exists $C_{\alpha} > 0$, such that
\begin{equation*}
 \kappa \int_0^L |\theta |^2 \ \leq \ C_{\alpha} \ \left(\int_0^L | q |^2 + \int_0^L |w|^2  + \Vert F \Vert_{\mathcal{H}} \ \Vert U \Vert_{\mathcal{H}} \right).
\end{equation*}
\noindent We conclude, thanks to the Poincar\'e estimates for $w \ \in H_0^1$  given by Lemma \ref{lemma103}, as well as the estimate \eqref{esti-resol1}, that:  
\begin{equation}\label{esti-resol3}
 \int_0^L |\theta |^2 \ \leq \ C \  \Vert F \Vert_{\mathcal{H}} \ \Vert U \Vert_{\mathcal{H}}. 
\end{equation}
Hence, combining \eqref{esti-resol1}, \eqref{esti-resol2} and \eqref{esti-resol3}, we get the wanted estimate and the proof of Theorem \ref{theorem3000-precise} is complete.

\section{conclusion}
In this study, we investigated the mathematical stability of the vibrations
of an inhomogeneous viscoelastic structure subject to a Cattaneo type law of heat conduction. We obtained exponential stability for Dirichlet conditions on the flux $q$ at the extremities, and polynomial stability when it is the temperature which satisfies Dirichlet conditions at the boundary. Indeed, these boundary conditions prevent us, up to now, to achieve exponential stability. However, we would expect the problem to be exponentially stable, no matter the boundary conditions, so that our result is a first step towards full stability analysis, even with mixed boundary conditions.  
\\

\paragraph{\emph{Acknowledgements}}
Octavio Vera thanks the support of the Fondecyt project 1121120. Amelie Rambaud thanks the support of the Fondecyt project 11130378. 



\end{document}